\numberwithin{equation}{section}
\newtheorem{theo}{Theorem}[section]
\newtheorem{prop}[theo]{Proposition}
\newtheorem{cor}[theo]{Corollary}
\newtheorem{claim}[theo]{Claim}
\theoremstyle{definition}
\newtheorem{defi}[theo]{Definition}
\theoremstyle{remark}
\newtheorem{rem}[theo]{Remark}
\theoremstyle{definition}
\newcommand{\ddbar}{dd^c}
\newcommand{\dbar}{\overline{\partial}}
\newcommand{\e}{\varepsilon}
\newcommand{\ome}{\widetilde{\omega}}
\newcommand{\I}[1]{\mathcal{I}(#1)}
\newcommand{\mi}{{\rm{min}}}
\newcommand{\rbig}{{\rm{big}}}
\newcommand{\lla}[0]{{\langle\!\langle}}
\newcommand{\rra}[0]{{\rangle\!\rangle}}
\begin{document}

\title[]{A Nadel vanishing theorem \\
for metrics with minimal singularities  \\
on big line bundles}

\author{SHIN-ICHI MATSUMURA}

\address{Mathematical Institute, Tohoku University, 6-3, Aramaki Aza-Aoba, Aoba-ku, Sendai 980-8578, Japan.}

\email{{\tt
mshinichi@m.tohoku.ac.jp, mshinichi0@gmail.com}}

\thanks{Classification AMS 2010: 14F18, 32L10,32L20. }

\keywords{vanishing theorems, 
singular metrics, 
multiplier ideal sheaves, 
the theory of harmonic integrals, 
$L^2$-methods, 
$\dbar$-equation. }

\maketitle

\begin{abstract}
The purpose of this paper is 
to establish a Nadel vanishing theorem 
for big line bundles 
with multiplier ideal sheaves of singular metrics  
admitting an analytic Zariski decomposition 
(such as, metrics with minimal singularities and Siu's metrics). 
For this purpose, we apply the theory of harmonic integrals 
and generalize Enoki's proof 
of Koll\'ar's injectivity theorem. 
Moreover we investigate the asymptotic behavior 
of harmonic forms with respect 
to a family of regularized metrics.  
\end{abstract}

\section{Introduction}
The Kodaira vanishing theorem plays 
an important role 
when we approach certain fundamental problems 
in complex geometry    
(for example, asymptotics of linear systems, 
extension problems of holomorphic sections, 
the minimal model program, and so on). 
By using multiplier ideal sheaves 
associated to singular metrics, 
this theorem is generalized to 
the Nadel vanishing theorem (\cite{Nad89}), 
which can be seen as an analytic counterpart  
of the Kawamata-Viehweg vanishing 
theorem in algebraic geometry (\cite{Kaw82}, \cite{Vie82}). 

In this paper, we study 
singular metrics 
admitting an analytic Zariski decomposition 
(such as, 
metrics with minimal singularities and Siu's metrics) 
and a Nadel vanishing theorem for them 
from the viewpoint of the theory of several complex variables and  complex differential geometry.

\begin{theo}[Nadel vanishing theorem, 
\cite{Nad89}, cf.\cite{Dem82}]\label{Nad}
Let $F$ be a big line bundle on a smooth projective variety $X$ and 
$h$ be a $($singular$)$ metric on $F$ with $($strictly$)$ positive curvature. 
Then we have 
\begin{equation*}
H^{i}(X, K_{X} \otimes F \otimes \I{h})= 0
\quad \text{for}\ \text{any}\ i >0. 
\end{equation*}
Here $\I{h}$ denotes the multiplier ideal sheaf 
of the $($singular$)$ metric $h$ and 
$K_{X}$ denotes the canonical bundle of $X$. 
\end{theo}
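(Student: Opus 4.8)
The plan is to compute the group on the left-hand side by an $L^{2}$-Dolbeault complex and then to annihilate every cohomology class in positive degree by solving a $\bar\partial$-equation, the required $L^{2}$-estimate being supplied by the strict positivity of the curvature. Since $X$ is projective, fix once and for all a K\"ahler form $\omega$ on $X$ (for instance the curvature of an ample line bundle); after rescaling $\omega$ we may read the hypothesis as $\sqrt{-1}\,\Theta_{h}(F)\geq \e\,\omega$ in the sense of currents for some $\e>0$, i.e. locally $h=e^{-\varphi}$ with $\deldel\varphi\geq \e\,\omega$.

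First I would set up the resolution. Let $\mathcal{L}^{n,q}$ be the sheaf of $(n,q)$-forms with values in $F$ whose coefficients are locally square-integrable with respect to $h$ and $\omega$. These sheaves are modules over the sheaf of $\mathcal{C}^{\infty}$ functions, hence fine, and the local solvability of $\bar\partial$ with $L^{2}$-estimates on small pseudoconvex sets (H\"ormander, Andreotti--Vesentini) shows that
\begin{equation*}
0 \longrightarrow K_{X}\otimes F \otimes \I{h} \longrightarrow \mathcal{L}^{n,0} \xrightarrow{\bar\partial} \mathcal{L}^{n,1} \xrightarrow{\bar\partial} \mathcal{L}^{n,2} \xrightarrow{\bar\partial} \cdots
\end{equation*}
is a resolution of $K_{X}\otimes F\otimes \I{h}$ (the kernel of $\bar\partial$ on $\mathcal{L}^{n,0}$ being exactly the holomorphic $n$-forms with values in $F$ that are locally $L^{2}$ for $h$). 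Hence $H^{i}(X,K_{X}\otimes F\otimes \I{h})$ is the $i$-th cohomology group of the complex of global sections, and since $X$ is compact a global section of $\mathcal{L}^{n,q}$ is precisely a global $L^{2}$ form; write $L^{2}_{n,q}(X,F;h,\omega)$ for this space.

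Next, take a class in $H^{i}$ with $i>0$ and represent it by a $\bar\partial$-closed $u\in L^{2}_{n,i}(X,F;h,\omega)$; the goal is to produce $v\in L^{2}_{n,i-1}(X,F;h,\omega)$ with $\bar\partial v=u$, for then that class is zero. This is where the curvature hypothesis enters, through the Bochner--Kodaira--Nakano identity on the K\"ahler manifold $(X,\omega)$: for forms of bidegree $(n,q)$ the curvature term satisfies $\langle[\sqrt{-1}\,\Theta_{h}(F),\Lambda_{\omega}]u,u\rangle\geq q\,\e\,|u|^{2}$, which for $q=i\geq 1$ is strictly positive. The usual Hilbert-space duality argument then produces the solution $v$ with $\|v\|^{2}_{h}\leq \tfrac{1}{i\e}\|u\|^{2}_{h}<\infty$; in particular $v$ has coefficients in $\I{h}$, so $[u]=0$, and since the class was arbitrary $H^{i}(X,K_{X}\otimes F\otimes \I{h})=0$ for all $i>0$.

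The hard part will be running this $L^{2}$-existence argument with the \emph{singular} weight $\varphi$: the Bochner--Kodaira--Nakano identity and the integrations by parts behind the duality argument are established for smooth weights, whereas here $\varphi$ is only quasi-plurisubharmonic. One handles this either by invoking Demailly's $L^{2}$-existence theorem for $\bar\partial$ with plurisubharmonic weights directly, or by approximation: choose smooth weights $\varphi_{\nu}\searrow\varphi$ with $\deldel\varphi_{\nu}\geq \e'\omega$ (possible because $X$ is projective, by the regularization of closed positive currents), solve $\bar\partial v_{\nu}=u$ with the uniform bound $\|v_{\nu}\|^{2}_{e^{-\varphi_{\nu}}}\leq \tfrac{1}{i\e'}\|u\|^{2}_{e^{-\varphi}}$, pass to a weak $L^{2}_{\mathrm{loc}}$ limit $v$, and verify by monotone convergence that $\bar\partial v=u$ and $\int_{X}|v|^{2}e^{-\varphi}\,dV_{\omega}<\infty$. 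One should also recall that $\I{h}$ is coherent, by Nadel's theorem, for the cohomology groups in the statement to be the intended ones; this too rests on the same $L^{2}$-machinery. Finally, the same conclusion can be reached within the theory of harmonic integrals — representing the class by a $\bar\partial$-harmonic $(n,i)$-form on $X$ minus the polar set of $\varphi$ equipped with a complete K\"ahler metric, and forcing it to vanish via the same curvature inequality — which is the route closest in spirit to the methods developed in this paper.
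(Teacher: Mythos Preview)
Your proposal is correct and follows the standard $L^{2}$ route to Nadel vanishing. Note, however, that the paper does not actually give its own proof of this theorem: Theorem~\ref{Nad} is stated as a classical background result, with the remark that ``under this assumption, we can construct solutions of the $\dbar$-equation with $L^{2}$-estimates, which implies Theorem~\ref{Nad} (see \cite{Dem82}).'' Your write-up is precisely an unpacking of that sentence --- the fine $L^{2}$-Dolbeault resolution of $K_{X}\otimes F\otimes\I{h}$, the Bochner--Kodaira--Nakano inequality yielding the a~priori estimate in bidegree $(n,i)$, and the regularization/weak-limit argument to pass from smooth to singular weights --- so there is nothing to compare: you have supplied exactly the proof the paper points to.
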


We mainly handle  
metrics with minimal singularities $h_{\min}$  
and Siu's metrics $h_{\rm{Siu}}$ 
(see Section 2, 3 for the precise definition). 
These metrics satisfy important properties 
(for example they admit an analytic Zariski decomposition), 
thus several authors have studied them 
(see \cite{DEL00}, \cite{Dem}).

The main purpose of this paper is 
to establish a Nadel vanishing theorem for 
$h_{\min}$ and $h_{\rm{Siu}}$. 
When we 
investigate the cohomology groups with coefficients in 
$K_{X} \otimes F \otimes \I{h_{\min}}$ and 
$K_{X} \otimes F \otimes \I{h_{\rm{Siu}}}$, 
we encounter the following difficulties: 
\begin{itemize}
\item[(1)] $h_{\min}$ and $h_{\rm{Siu}}$ have non-algebraic (transcendental) singularities in general. 
\item[(2)] $h_{\min}$ and $h_{\rm{Siu}}$  do not have strictly positive curvature except the trivial case. 
\end{itemize}

The proof of Theorem \ref{Nad} 
heavily depends on the assumption 
that the curvature of $h$ is \lq \lq strictly" positive. 
Under this assumption, we can construct solutions of 
the $\dbar$-equation with $L^{2}$-estimates, 
which implies Theorem \ref{Nad}. 
Indeed the theorem does not hold 
even if the curvature of $h$ is semi-positive. 
Nevertheless, we can expect 
that all higher cohomology groups 
with coefficients in  
$K_{X} \otimes F \otimes \I{h_{\min}}$ and 
$K_{X} \otimes F \otimes \I{h_{\rm{Siu}}}$ 
vanish from the special characteristics of $h_{\min}$ and $h_{\rm{Siu}}$.
This is because for a big line bundle $F$ we have already known 
\begin{align*}
H^{i}(X, K_{X} \otimes F \otimes \I{\|F \|}) = 0   
\quad  \text{for}\ \text{any}\ i >0, 
\end{align*}
where $\I{\|F \|}$ is the asymptotic multiplier ideal 
sheaf of $F$ (see \cite{DEL00} for the precise definition). 
The multiplier ideal sheaves $\I{h_{\mi}}$ and 
$\I{h_{\rm{Siu}}}$  
can be seen as 
an analytic counterpart of $\I{\|F \|}$, 
and thus all higher cohomology groups with coefficients in 
$K_{X} \otimes F \otimes \I{h_{\min}}$ and 
$K_{X} \otimes F \otimes \I{h_{\rm{Siu}}}$ 
can be expected to vanish. 
The asymptotic multiplier 
ideal sheaf $\I{\|F \|}$ does not always agree with 
$ \I{h_{\min}}$, 
but it is conjectured that the equality 
$\I{\|F \|} = \I{h_{\min}}$ holds 
if $F$ is big or more generally abundant 
(that is, the numerical dimension $\nu(F)$ agrees with the Kodaira dimension $\kappa(F)$).    
Hence it is interesting 
to study a Nadel vanishing theorem for $\I{h_{\mi}}$ and  $\I{h_{\rm{Siu}}}$.  
This is a natural problem, 
however it has been an open problem. 
The following is the main result of this paper, 
which gives an affirmative answer for this problem. 
\begin{theo}[=Corollary \ref{main-co}.]\label{m-min}
Let $F$ be a big line bundle 
on a smooth projective variety $X$ and 
$h_{\min}$ be a metric with minimal singularities on $F$. 
Then we have 
\begin{equation*}
H^{i}(X, K_{X} \otimes F \otimes \I{h_{\min}}) = 0 
\quad {\text{for}}\ {\text{any}}\ i >0.
\end{equation*}
\end{theo}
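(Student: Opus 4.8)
The plan is to prove Theorem~\ref{m-min} by the theory of harmonic integrals, adapting Enoki's proof of Koll\'ar's injectivity theorem to a family of regularized metrics and then letting the regularization parameter tend to zero; the only external input will be the Nadel vanishing theorem (Theorem~\ref{Nad}) for strictly positive curvature. The first step is to use bigness: there is a singular metric $h_{\rbig}$ on $F$, with analytic singularities along a divisor $D$, such that $\sqrt{-1}\,\Theta_{h_{\rbig}}(F)\ge \e_{0}\,\omega$ for some K\"ahler form $\omega$ and some $\e_{0}>0$; after replacing $h_{\rbig}$ by a constant multiple we may normalize its local weight $\varphi_{\rbig}$ so that $\varphi_{\rbig}\le\varphi_{\mi}$, where $\varphi_{\mi}$ is the local weight of $h_{\mi}$ (this is legitimate because $\varphi_{\rbig}-\varphi_{\mi}$ is a globally defined function, bounded above by the minimality of $h_{\mi}$). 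For $0<\e<1$ set $h_{\e}:=h_{\mi}^{\,1-\e}\,h_{\rbig}^{\,\e}$, the metric with weight $\varphi_{\e}:=(1-\e)\,\varphi_{\mi}+\e\,\varphi_{\rbig}$. Then $\sqrt{-1}\,\Theta_{h_{\e}}(F)\ge \e\e_{0}\,\omega>0$, while $\varphi_{\rbig}\le\varphi_{\e}\le\varphi_{\mi}$, so that $\I{h_{\e}}\subseteq\I{h_{\mi}}$ and $\varphi_{\e}\uparrow\varphi_{\mi}$ as $\e\to0$.

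Now fix $i>0$ and, using Demailly's description of the cohomology through $L^{2}$-harmonic forms (with respect to $h_{\mi}$ and a suitable complete K\"ahler metric $\ome$ on the complement of $D$), represent a class of $H^{i}(X,K_{X}\otimes F\otimes\I{h_{\mi}})$ by the $L^{2}(h_{\mi},\ome)$-harmonic $F$-valued $(n,i)$-form $u$; the goal is $u\equiv0$. Since $\sqrt{-1}\,\Theta_{h_{\mi}}(F)\ge0$, the Bochner--Kodaira--Nakano identity for the harmonic form $u$ gives $D'_{h_{\mi}}u=0$, $\dbar^{*}u=0$, and the pointwise vanishing $\langle \sqrt{-1}\,\Theta_{h_{\mi}}(F)\wedge\Lambda u,\,u\rangle=0$; in particular $u$ behaves like a holomorphic object and vanishes identically as soon as it vanishes on a nonempty open set. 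The role of the metrics $h_{\e}$ is that $\sqrt{-1}\,\Theta_{h_{\e}}(F)>0$, so Theorem~\ref{Nad} (equivalently H\"ormander's estimate on $(X\setminus D,\ome)$) gives $H^{i}(X,K_{X}\otimes F\otimes\I{h_{\e}})=0$ together with $\dbar$-solutions carrying an $L^{2}(h_{\e})$-estimate — whose constant is of order $1/(\e\e_{0})$, one source of trouble in the limit. Following Enoki's scheme, I would approximate $u$ by $\dbar$-closed forms $u_{\e}\in L^{2}(h_{\e},\ome)$ (cutting off near $D$ and correcting by a small $\dbar$-solution, legitimate since $\ome$ is complete and $h_{\rbig}$ has analytic singularities); each $u_{\e}$ is cohomologous to $0$ in the vanishing group $H^{i}(X,K_{X}\otimes F\otimes\I{h_{\e}})$, so its $h_{\e}$-harmonic projection is $0$, and the extra positivity $\e\e_{0}\,\omega$ in the Bochner identity bounds the difference between $u_{\e}$ and that projection. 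One may repackage this step as a Koll\'ar-type injectivity theorem for $h_{\mi}$, exploiting that $h_{\mi}$ admits an analytic Zariski decomposition; Theorem~\ref{m-min} is then its Corollary~\ref{main-co}, obtained by applying Theorem~\ref{Nad} to a strictly positive twist.

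The decisive and, I expect, hardest step is the asymptotic analysis of $\{u_{\e}\}$ as $\e\to0$: one must establish a uniform bound $\int_{X}|u_{\e}|^{2}_{h_{\e},\ome}\,dV_{\ome}\le C$, extract a weak limit, and — the crucial point — show that no $L^{2}$-mass escapes to $D$ (nor to the transcendental singular locus of $h_{\mi}$) in the limit, so that the weak limit is $u$ itself and $\|u\|^{2}_{h_{\mi},\ome}=\lim_{\e\to0}\|u_{\e}\|^{2}_{h_{\e},\ome}=0$, forcing $u\equiv0$. This mass-preservation is exactly where the analytic Zariski decomposition of $h_{\mi}$ should enter quantitatively — it is what makes $\I{h_{\mi}}$ the increasing limit of $\I{h_{\e}}$ in a usable rather than merely set-theoretic sense — together with the precise shape of $h_{\rbig}$ coming from bigness and, if needed, an $\e$-dependent choice of the complete metrics $\ome$. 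Granting this convergence, $u\equiv0$, every higher cohomology class vanishes, and Theorem~\ref{m-min} follows.
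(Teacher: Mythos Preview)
Your proposal has two genuine gaps, and they are precisely the difficulties the paper is built to overcome.

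\textbf{First gap: no harmonic theory is available for $h_{\min}$ or for your $h_{\e}$.} You want to ``represent a class \dots\ by the $L^{2}(h_{\mi},\ome)$-harmonic $F$-valued $(n,i)$-form $u$'' and later to take the ``$h_{\e}$-harmonic projection'' of $u_{\e}$. But the Bochner--Kodaira--Nakano machinery and the Hodge decomposition on $Y=X\setminus D$ require the metric on $F$ to be \emph{smooth} on $Y$. The whole point is that $h_{\min}$ may have transcendental singularities not contained in any divisor, so $h_{\min}$ need not be smooth on $X\setminus D$; and since your $h_{\e}=h_{\min}^{1-\e}h_{\rbig}^{\e}$ inherits those singularities, it is not smooth on $X\setminus D$ either. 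The paper resolves this by replacing $h_{\min}$ not by a convex interpolation with $h_{\rbig}$ but by the \emph{equisingular approximation} $h_{\e}$ of Demailly--Peternell--Schneider: these $h_{\e}$ are smooth on a fixed Zariski open set, satisfy $\I{h_{\e}}=\I{h_{\min}}$, and have curvature $\ge -\e\omega$. Harmonic theory is then applied to each $h_{\e}$ (not to $h_{\min}$), and one studies the family of harmonic representatives $u_{\e}$.

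\textbf{Second gap: the $1/\e$ blow-up is not an incidental nuisance but the heart of the matter, and your outline does not address it.} You note that H\"ormander's estimate for $h_{\e}$ carries a constant of order $1/(\e\e_{0})$ and call it ``one source of trouble in the limit'', then in the final paragraph simply postulate the needed uniform control (``Granting this convergence\dots''). The paper's key device, which is absent from your sketch, is the multiplication by a section $s\in H^{0}(X,F^{m})$ with $|s|_{h_{\rbig}^{m}}$ bounded: one solves $\overline{\partial}\beta_{\e}=s\,u_{\e}$ (not $\overline{\partial}\gamma_{\e}=u_{\e}$) using the auxiliary metric $H_{\e}=h_{\rbig}^{m}h_{\e}e^{-\varphi}$, whose curvature is $\ge\ome$ \emph{uniformly in $\e$}. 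This is what yields $\|\beta_{\e}\|$ uniformly bounded. In parallel, Enoki's argument (applied to the approximants $h_{\e}$, whose curvature is only $\ge -\e\omega$) gives $\|D^{''*}_{h_{\e}^{m+1}}(s\,u_{\e})\|\to 0$; combining the two gives $\|s\,u_{\e}\|\to 0$ and hence $u_{\e}\rightharpoonup 0$ weakly. The ``mass preservation'' you worry about is then not needed in the form you state: one shows instead that the weak limit of $u_{\e}$ is zero on each set $\{|s|_{h^{m}_{\e_{0}}}>\delta\}$, and concludes $u\in\mathrm{Im}\,\overline{\partial}$ by closedness of $\mathrm{Im}\,\overline{\partial}$ in the weak topology. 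Without the section $s$ and the equisingular approximation, your scheme has no mechanism to produce either uniform $\overline{\partial}$-bounds or a workable harmonic decomposition.
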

This theorem follows from Theorem \ref{main}, 
which asserts that all higher cohomology groups with coefficients in $K_{X} \otimes F \otimes \I{h}$ vanish if the singular metric $h$ with semi-positive curvature 
is less singular than some singular metric with strictly positive curvature.  
From Theorem \ref{main}, 
we can obtain the same conclusion   
for $h_{\rm{Siu}}$ (Corollary \ref{main-co}).

These results  
are closely related to 
the openness conjecture of 
Demailly and Koll$\rm{\acute{a}}$r in \cite{DK01}, 
which is a conjecture on singularities of 
plurisubharmonic (psh for short) functions.
When $F$ is a big line bundle and 
$h_{\min}$ has algebraic singularities, 
we can easily see that  $\I{\|F \|} = \I{h_{\min}}$ holds.
However we want to emphasize that 
$h_{\min}$ does not always have 
algebraic singularities (see Section 4). 

For the proof of the main result,  
we need to take a transcendental approach 
since we need to overcome difficulties (1), (2).  
The proof is based on a combination 
of techniques to solve the $\dbar$-equation 
and Enoki's proof of Koll\'ar's injectivity theorem 
for line bundles admitting smooth metrics with semi-positive curvature 
(see \cite{Eno90}, \cite{Kol86}). 
The strategy of the proof 
can be divided into four steps 
as follows:

In Step 1, 
we approximate the metric $h_{\min}$ 
by singular metrics 
$\{ h_{\e} \}_{\e>0}$ that are smooth 
on a Zariski open set. 
We can use the theory of harmonic integral 
by taking a suitable complete K\"ahler from on this Zariski open set
(see \cite{Fuj12-A}). 
Then we represent a given cohomology class  
by the associated harmonic form  
$u_{\e}$ with respect to $h_{\e}$.

In Step 2, by taking a suitable (holomorphic) section $s$ 
of some positive multiple $F^{m}$ of $F$, 
we show that the norm of  
$D^{''*} su_{\e}$ converges to zero 
when $\e$ goes to zero, 
where $D^{''*} $ is 
the adjoint operator of $\dbar$. 
If the curvature  of $h_{\e}$ is semi-positive, 
we can see that $D^{''*} su_{\e}$  is zero. 
However it is unfortunately not semi-positive 
although the curvature of $h$ is semi-positive. 
For this reason, we need to generalize Enoki's technique for 
Koll\'ar's injectivity theorem 
by investigating the asymptotic behavior of $u_{\e}$.

In Step 3, we solve the $\dbar$-equation with $L^2$-estimates. 
By considering $su_{\e}$ instead of $u_{\e}$, 
we construct a solution $\beta_{\e}$ of 
the $\dbar$-equation $\dbar \beta_{\e} = su_{\e}$. 
Moreover, in this step, we show that 
the $L^{2}$-norm $\| \beta_{\e} \|$  
is uniformly bounded 
from the special characteristics of 
$h_{\min}$.

In Step 4, we investigate the limit of $u_{\e}$. 
The above arguments imply 
\begin{equation*}
\| su_{\e} \| ^{2} = 
\lla su_{\e}, \dbar \beta_{\e} \rra
\leq \| D^{''*} su_{\e}\|
\| \beta_{\e} \| \to 0 \quad {\text{as }} \e \to 0.
\end{equation*}
From this convergence we prove that 
$u_{\e}$ converges to zero in a suitable sense. This completes the proof.

This paper is divided into four sections. 
In Section 2, we 
recall the fundamental results that
are often used in this paper.
In Section 3, we give a proof of the main result and its corollaries. 
In Section 4, we give examples that tell us 
that metrics with minimal singularities 
do not always have algebraic singularities.

This paper is a slightly revised version of the paper in arXiv:1306.2497v1. 
Ten months after we finished writing it, 
the strong openness conjecture was solved by Guan and Zhou in \cite{GZ13}, 
and another proof was given by Hiep in \cite{Hie14}. 
Although their celebrated result implies the main theorem, 
we believe that it is worth displaying our methods to handle some degenerate metrics. 
Our viewpoint is quite different from them 
and our techniques seem to have some applications (for example see \cite{Mat14B}). 
In fact, we can obtain analytic versions of the injectivity theorem 
by applying our techniques in \cite{Mat13-B} and applications to 
the problem of extending holomorphic sections in \cite{GM13}.

\subsection*{Acknowledgment}
The author wishes to express his deep gratitude to Professor Shigeharu Takayama 
who gave the main problem of this paper when he was a master course student. 
He also would like to thank to the referee for carefully reading 
our manuscript and for giving useful comments. 
He is supported by the Grant-in-Aid for 
Young Scientists (B) $\sharp$25800051 from JSPS.

\section{Preliminaries}

In this section, 
we recall fundamental results 
related to singular metrics and the theory of harmonic integrals. 
For more details, refer to \cite{Dem}, \cite{Dem-book}, \cite{Laz}.

\subsection{Singular Metrics and the Nadel Vanishing Theorem}
Throughout this subsection, 
let $X$ be a compact complex manifold and 
$F$ be a line bundle on $X$.    
First we recall the definition and properties of 
singular metrics and their multiplier ideal sheaves. 

\begin{defi}[Singular metrics, curvatures, algebraic singularities] 
\ \label{s-met}\vspace{0.2cm} \\
(1) A (hermitian) metric $h$ on $F$ is called 
a {\textit{singular metric}}, if  
for a local trivialization $\theta:F|_{U} \cong 
U \times \mathbb{C}$ and a local section $\xi$ of $F$ on 
an open set $U \subset X$, there exists an 
$L^{1}_{\rm{loc}}$-function $\varphi$ on $U$ such that 
\begin{equation*}
|\xi|_{h} = |\theta (\xi)| e^{- \varphi}. 
\end{equation*}
Here $\varphi$ is called the local \textit{weight} of $h$ 
with respect to the trivialization. 
\vspace{0.2cm} \\ 
(2) The {\textit{curvature current}} 
$\sqrt{-1} \Theta_{h}(F)$ 
associated to $h$ is defined by 
\begin{equation*}
\sqrt{-1} \Theta_{h}(F) = \ddbar \varphi, 
\end{equation*}
where $ \varphi$ is a local weight of $h$. 
\vspace{0.2cm} \\ 
(3) 
A singular metric $h$ on $F$ is said to have 
{\textit{algebraic}} (resp. {\textit{analytic}}) {\textit{singularities}}, 
if there exists an ideal sheaf 
$\mathcal{I} \subset \mathcal{O}_{X}$ such that 
a local weight $ \varphi$ of $h$ can be locally 
written as 
\begin{equation*}
\varphi = \frac{c}{2} \log \big( 
|f_{1}|^{2} + |f_{2}|^{2} + \cdots + |f_{k}|^{2}\big) +v, 
\end{equation*}
where $c \in \mathbb{Q}_{>0}$ 
(resp. $c \in \mathbb{R}_{>0}$), 
$f_{1}, \dots f_{k}$ are local generators of $\mathcal{I}$ 
and $v$ is a smooth function.

\end{defi}
In this paper, we simply abbreviate 
the singular metric (resp. the curvature current) 
to the metric (resp. the curvature). 
The Levi form $\ddbar \varphi$ in the definition of the curvature 
is taken in the sense of 
distributions.  
The curvature of singular metrics 
is not always smooth a $(1,1)$-form but $(1,1)$-current. 
The curvature $\sqrt{-1} \Theta_{h}(F)$ 
is said to be {\textit{semi-positive}} (resp. {\textit{positive}}) 
if $\sqrt{-1} \Theta_{h}(F) \geq 0$
(resp. $\sqrt{-1} \Theta_{h}(F) \geq \omega$ for some hermitian form $\omega$) 
in the sense of $(1,1)$-currents. 
The curvature $\sqrt{-1} \Theta_{h}(F)$ is semi-positive 
if and only if a local weight $ \varphi$ of $h$ is a psh function. 
For a given singular metric $h$, 
we can define the multiplier ideal sheaf $\I{h}$. 
It is a coherent ideal sheaf that measures singularities of $h$ 
by using $L^2$-integrability of holomorphic functions.

\begin{defi}
Let $h$ be a singular metric on $F$ such that 
$\sqrt{-1} \Theta_{h}(F) \geq \gamma$  
for some smooth $(1,1)$-form $\gamma$ on $X$. 
Then the ideal sheaf $\I{h}$ 
defined to be 
\begin{equation*}
\I{h}(U):= \{f \in \mathcal{O}_{X}(U)\mid 
|f|e^{-\varphi} \in L^{2}_{\rm{loc}}(U) \}
\end{equation*}
for an open set $U \subset X$ is called the \textit{multiplier ideal sheaf} associated to $h$. 
\end{defi}

The original Nadel vanishing theorem (Theorem \ref{Nad}) 
asserts that all higher cohomology groups 
with coefficients in $K_{X} \otimes F \otimes \I{h}$
vanish if $h$ has \lq \lq strictly" positive curvature. 
Our motivation is to obtain a Nadel vanishing theorem 
for special singular metrics with semi-positive curvature, 
in particular metrics with minimal singularities $h_{\mi}$. 
The metric $h_{\mi}$ never has strictly positive 
curvature except the trivial case where $F$ is an ample line bundle (for example see \cite{Mat14A}). 
Nevertheless, from the reason stated in Section 1, 
we can obtain a Nadel vanishing theorem    
for $h_{\mi}$ by the special property of minimal singularities. 
Let us recall the definition of metrics with minimal singularities. 
\begin{defi}
Let $h_{1}$ and $h_{2}$ be singular metrics on 
$F$ with semi-positive curvature. 
The metric $h_{1}$ is said to 
be {\textit{less singular}} than $h_{2}$ 
if $h_{1} \leq C h_{2}$ for some positive constant $C>0$. 
\end{defi}
When $F$ is pseudo-effective 
(that is, $F$ admits a singular metric with semi-positive curvature), 
we can construct a metric $h_{\min}$ on $F$ with the following properties: 
\begin{itemize}
\item[$\bullet$] $h_{\min}$ has semi-positive curvature.
\item[$\bullet$] $h_{\min}$ is less singular than any metric on $F$ with semi-positive curvature.
\end{itemize}
Such a metric is uniquely determined up to 
equivalence of singularities  
(see \cite[(6.4) Theorem]{Dem} for more details). 
Metrics with minimal singularities  
do not always have analytic singularities. 
Indeed, if $h_{\mi}$ has analytic singularities, 
then $F$ admits a birational Zariski decomposition 
(see Section 4). 
However, there exist big line bundles 
that do not admit a birational Zariski decomposition. 

\subsection{Equisingular Approximations}
In the proof of the main result, 
we apply the equisingular approximation to a given singular metric. 
In this subsection, 
we reformulate \cite[Theorem 2.3.]{DPS01} with our notation 
and give further remarks. 

\begin{theo}$($\cite[Theorem 2.3.]{DPS01}$).$ \label{equi}
Let $X$ be a compact K\"ahler manifold and $F$ be a line bundle 
with a metric $h$ 
with semi-positive curvature. 
Then there exist  
singular metrics $\{h_{\e} \}_{1\gg \e>0}$ on $F$ with  the 
following properties:
\begin{itemize}
\item[(a)] $h_{\e}$ is smooth on $X \setminus Z_{\e}$, where $Z_{\e}$ is a subvariety on $X$.
\item[(b)] $h_{\e_{2}} \leq h_{\e_{1}} \leq h$ holds for any $0< \e_{1} < \e_{2} $. 
\item[(c)] $\I{h}= \I{h_{\e}}$.
\item[(d)] $\sqrt{-1} \Theta_{h_{\e}}(F) \geq -\e \omega$.
\end{itemize}
Moreover, if the set $\{x \in X \mid \nu(\varphi, x) >0 \}$ is contained in 
a subvariety $Z$, then we can add the property 
that $Z_{\e} $ is contained in $Z$ for any $\e > 0$.  
Here $\nu(\varphi, x)$ denotes the Lelong number at $x$ 
of a local weight $\varphi$ of $h$. 
\end{theo}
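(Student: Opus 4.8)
The properties $(a)$–$(d)$ are a transcription of \cite[Theorem 2.3]{DPS01} into the language of metrics, so I would begin by recalling that construction. Fixing a local weight $\varphi$ of $h$ (so that $\ddbar\varphi$ is the semi-positive curvature), Demailly's analytic regularization produces a decreasing sequence of quasi-psh functions $\varphi_{\e}\searrow\varphi$ with $\ddbar\varphi_{\e}\geq -\e\omega$ that have analytic singularities and satisfy $\I{\varphi_{\e}}=\I{\varphi}$; setting $h_{\e}:=h\,e^{2(\varphi-\varphi_{\e})}$ then gives $(a)$ (analytic singularities are smooth off a subvariety), $(b)$ (from the monotonicity of $(\varphi_{\e})$ together with $\varphi_{\e}\geq\varphi$, hence $h_{\e}\leq h$), $(d)$ (the local Bergman-kernel approximation, glued over $X$ by a partition of unity using the compactness of $X$, costs only an arbitrarily small negative contribution), and $(c)$. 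I would point out that in $(c)$ the inclusion $\I{h}\subseteq\I{h_{\e}}$ is trivial from $h_{\e}\leq h$, whereas the equisingularity $\I{h_{\e}}=\I{h}$ — that the approximation loses \emph{no} singularity, not merely up to an $\e$-loss of Lelong numbers — is the one genuinely delicate point; this is exactly the content of \cite{DPS01} (and can nowadays also be deduced from the strong openness theorem of \cite{GZ13}), so I would cite it rather than reproduce it.

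It remains to establish the last assertion, and here I would argue by tracking the locus along which the regularized weights are singular. For $c>0$ put $E_{c}(\varphi):=\{x\in X\mid \nu(\varphi,x)\geq c\}$; by Siu's semicontinuity theorem each $E_{c}(\varphi)$ is an analytic subvariety of $X$. In the construction above, $\varphi_{\e}$ has analytic singularities along the common zero locus of a finite family of holomorphic sections (arising from the $L^{2}$-extension step), and at any point $x$ of that locus one has $\nu(\varphi,x)\geq\nu(\varphi_{\e},x)\geq c_{\e}$ for some $c_{\e}>0$ depending only on the order of the approximation; hence $Z_{\e}$ may be taken with $Z_{\e}\subseteq E_{c_{\e}}(\varphi)$. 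Under the hypothesis $\{x\in X\mid \nu(\varphi,x)>0\}\subseteq Z$, and since $c_{\e}>0$, this yields
\[
Z_{\e}\ \subseteq\ E_{c_{\e}}(\varphi)\ \subseteq\ \{x\in X\mid \nu(\varphi,x)>0\}\ \subseteq\ Z ,
\]
which is the desired refinement. (If one wishes to retain the original $Z_{\e}$ of \cite{DPS01}, it already has this shape, being assembled from Lelong upperlevel sets; otherwise one simply replaces it by $Z_{\e}\cap Z$, which still contains the singular support of $h_{\e}$ by the estimate above.)

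The only real obstacle is conceptual rather than computational: everything rests on the fact, proved in \cite{DPS01}, that Demailly's approximation can be arranged to be genuinely \emph{equisingular}, i.e.\ property $(c)$, which is what lifts this statement above a routine regularization result; once that is granted, the monotonicity, the curvature bound, and the localization $Z_{\e}\subseteq Z$ are bookkeeping built on top of the construction. I do not expect any difficulty in the added remark beyond identifying $Z_{\e}$ with a positive Lelong upperlevel set of $\varphi$, which is transparent from the definition of the approximating weights.
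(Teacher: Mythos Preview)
Your approach is essentially the paper's: cite \cite{DPS01} for $(a)$--$(d)$ and then derive the refinement from the Lelong-number comparison $\nu(\varphi_{\e},\cdot)\leq\nu(\varphi,\cdot)$ (this is exactly inequality (2.5) in \cite{DPS01}, which the paper also invokes) together with the fact that a function with logarithmic poles is smooth precisely where its Lelong number vanishes. Two small points worth correcting: first, the local-to-global step in \cite{DPS01} is carried out by a regularized maximum followed by Richberg's regularization, not by a partition of unity (a partition of unity would destroy plurisubharmonicity, so property $(d)$ would fail); second, the paper is a bit more careful than you are about why smoothness on $X\setminus Z$ persists after this gluing---namely, the local approximants are smooth on $X\setminus Z$, hence their supremum is continuous there, and Richberg's procedure then upgrades continuity to smoothness---whereas your argument implicitly assumes that the final global $\varphi_{\e}$ still has analytic singularities with the Lelong bound intact, which is true but is exactly what that extra sentence justifies.
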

\begin{proof}
Fix a smooth metric $g$ on $F$. 
Then there exists an $L^{1}$-function $\varphi$ on $X$ 
with $h=g e^{- \varphi}$. 
By applying \cite[Theorem 2.3.]{DPS01} to $\varphi$, 
we can obtain quasi-psh functions 
$\varphi _{\nu}$ with equisingularities. 
For a given $\e>0$, by taking a large $\nu=\nu(\e)$,
we define $h_{\e}$ by $h_{\e}:=g e^{- \varphi_{\nu(\e)}}$. 
Then the metric $h_{\e}$ satisfies properties (a), (b), (c), (d). 

The latter conclusion follows from the proof. 
We will see this fact shortly by using the notation in \cite{DPS01}. 
In their proof, they locally approximate $\varphi$ by 
$\varphi_{\e, \nu, j}$ with logarithmic pole. 
By inequality (2.5) in \cite{DPS01}, 
the Lelong number of 
$\varphi_{\e, \nu, j}$ is 
less than or equal to that of $\varphi$. 
Hence   
$\varphi_{\e, \nu, j}$ is smooth on $X \setminus Z$  
since $\varphi_{\e, \nu, j}$ has a logarithmic pole. 
Then $\varphi _{\nu}$ is obtained from 
Richberg's regularization of 
the supremum of 
these functions (see around (2.5) and (2.7)). 
Since  the supremum is continuous on $X \setminus Z$,  
we obtain the latter conclusion. 
\end{proof}

\subsection{The Theory of Harmonic Integrals}
We recall some facts on the theory of harmonic integrals. 
Throughout this subsection, 
let $Y$ be a (not necessarily compact) complex manifold with a hermitian form $\omega$ 
and $E$ be a line bundle  on $Y$ with a smooth metric $h$.

For $E$-valued $(p,q)$-forms  $u$ and $v$,  
the pointwise inner product 
$\langle u, v\rangle _{h, \omega}$
can be defined, and the (global) inner product 
$\lla u, v \rra  _{h, \omega}$ 
can also be defined by
\begin{equation*}
\lla u, v \rra  _{h, \omega}:=
\int_{Y} 
\langle u, v\rangle _{h, \omega}\ \frac{\omega^{n}}{n!}.
\end{equation*}
Then the $L^{2}$-space of $E$-valued  $(p, q)$-forms 
is defined as follows:  
\begin{equation*}
L_{(2)}^{p, q}(Y, E)_{h, \omega}:= 
\{u \mid u \text{ is an }E \text{-valued\ } 
(p, q)\text{-form with } 
\|u \|_{h, \omega}< \infty \}. 
\end{equation*}
The connection $D_{h}$ on $E$  
is determined by the holomorphic structure of $E$ and 
the hermitian metric $h$, 
which is called the Chern connection. 
The Chern connection $D_{h}$ can be written as 
$D_{h} = D'_{h} + D''_{h}$ with the $(1,0)$-connection $D'_{h}$ and the 
$(0,1)$-connection $D''_{h}$. 
The $(0,1)$-connection $D''_{h}$ agrees with $\dbar$ 
by the definition.
The connections $D'_{h}$ and $D''_{h}$ 
can be seen as a closed and densely defined operator on 
$L_{(2)}^{p, q}(Y, E)_{h, \omega}$. 
Remark that the formal adjoints $D^{'*}_{h}$ and $D^{''*}_{h}$ 
coincide with the Hilbert space adjoints 
if $\omega$ is a \textit{complete} metric on $Y$ 
(see \cite[(3,2) Theorem in Chapter 8]{Dem-book}).

\begin{prop}\label{Nak}
Let $Y$ be a complex manifold of dimension $n$ and 
$ \omega$ be a complete K\"ahler metric on $Y$. 
If the inequality $\sqrt{-1}\Theta_{h}(E) \geq  - C \omega$ 
holds for some positive constant $C>0$, 
then we have the following equality 
\begin{equation*}
\| D_{h}^{''*}u \|_{h, \omega}^{2} + 
\|\overline{\partial} u \|_{h, \omega}^{2} = 
\| D_{h}^{'*}u  \|_{h, \omega}^{2} + 
\lla \sqrt{-1}\Theta_{h}(E)\Lambda_{\omega} u, u
\rra_{h, \omega} 
\end{equation*} 
for every $u \in L_{(2)}^{n, i}(Y, E)_{h, \omega}$ 
with $u \in {\rm{Dom}}D_{h}^{''*} \cap {\rm{Dom}} 
\overline{\partial}$.   
Here $\Lambda_{\omega}$ 
denotes the adjoint operator of the wedge product $\omega \wedge \cdot$. 
\end{prop}
\begin{proof}
This proposition is obtained from   
the Bochner-Kodaira-Nakano identity:  identity and the density lemma (cf. \cite{DPS01}). 
Since $\omega$ is a K\"ahler form, 
we have the Bochner-Kodaira-Nakano identity: 
\begin{equation*}
\Delta^{''} = \Delta^{'} + [\sqrt{-1}\Theta_{h}(F), \Lambda_{\ome}].  
\end{equation*}
Here $\Delta^{'}$ (resp. $\Delta^{''}$) 
is the Laplacian operator defined by 
$\Delta^{'}:= D_{h}^{'}D_{h}^{'*} +D_{h}^{'*} D_{h}^{'}$ (resp. 
$\Delta^{''}:= D_{h}^{''}D_{h}^{''*} +D_{h}^{''*} D_{h}^{''}$)  
and [$\cdot$\ , $\cdot$] is the graded Lie bracket. 
The Bochner-Kodaira-Nakano identity implies that 
the equality in the proposition holds 
if $u$ is smooth and compactly supported.

Since $\omega$ is complete, we can take a family of 
cut-off functions $\{ \psi_{\ell}\}_{\ell=1}^{\infty}$ 
with $|d\psi_{\ell}|_{\ome}\leq 1$. 
For a given $u$ satisfying $u \in {\rm{Dom}}D_{h}^{''*} \cap {\rm{Dom}} 
\overline{\partial}$, 
by putting $u_{\ell}:= u \psi_{\ell}$ and 
considering convolution with regularizing kernels $\rho_{\e}$, 
we obtain 
$u_{\ell,\e}:=u_{\ell}*\rho_{\e}$ satisfying 
the following properties: 
\begin{enumerate}
\item[$\bullet$] $u_{\ell,\e}$ is smooth and compactly supported. 
\item[$\bullet$] $u_{\ell,\e}$ converges to $u$ in $L^{n,q}_{(2)}(Y, E)_{h, \omega}$. 
\item[$\bullet$] 
$D_{h}^{''*} u_{\ell,\e}$ (resp. $\overline{\partial}u_{\ell,\e}$) 
converges to $D_{h}^{''*}u$ (resp. $\overline{\partial}u$) in 
$L^{n,\bullet}_{(2)}(Y, E)_{h, \omega}$. 
\end{enumerate}
The third property comes from $|d\psi_{\ell}|_{\ome}\leq 1$
(completeness of $\ome$). 
When $\e$ goes to zero, 
we have the equality for $u_{\ell}$ in the proposition. 
Further 
by the assumption of $\sqrt{-1}\Theta_{h}(E) \geq  - C \omega$, 
the integrand of the second term 
\begin{equation*}
\lla \sqrt{-1}\Theta_{h}(E)\Lambda_{\omega} u_{\ell}, u_{\ell} 
\rra_{h, \omega}
= \int_{Y} \langle \sqrt{-1}\Theta_{h}(E)\Lambda_{\omega} u_{\ell}, u_{\ell}
\rangle_{h, \omega}\ \frac{\omega^{n}}{n!}.
\end{equation*}
is semi-positive if we add the term $C|u_{\ell}|^{2}_{h, \omega}$. 
Therefore 
we obtain the conclusion 
by Lebesgue's monotone convergence theorem.  
\end{proof}

\subsection{On the Openness Conjecture}
It is worth mentioning that 
the main result of this paper is related 
to the (strong) openness conjecture on singularities of psh functions 
(see \cite[5.3 Remark]{DK01}).

When $h_{\min}$ is a metric with minimal singularities on a big line bundle $F$, 
we have  
\begin{equation*}
\mathcal{I}_{+}(h_{\mi}) \subset 
\mathcal{I}(\| F \|) \subset 
\mathcal{I}(h_{\mi}), 
\end{equation*}
where $\mathcal{I}_{+}(h_{\mi})$ is 
the {\textit{upper semi-continuous regularization}}  
of the multiplier ideal sheaf of $h_{\min}$ 
(see \cite{DEL00} for the definition). 
Further we can easily see that 
all higher cohomology groups with coefficients in 
$K_{X}\otimes F \otimes \mathcal{I}_{+}(h_{\mi})$ vanish.  
Indeed, it is easy to check that there exists a metric $h$ on $F$  
such that the curvature of $h$ is strictly positive and $\I{h}=\mathcal{I}_{+}(h_{\mi})$. 
Therefore we can directly obtain a vanishing theorem 
for $\mathcal{I}_{+}(h_{\mi})$ 
by the original Nadel vanishing. 
We can easily see that 
the multiplier ideal sheaf $\I{\varphi}$ agrees with 
$\mathcal{I}_{+}(\varphi)$  
when $\varphi$ has analytic singularities,  
but unfortunately $h_{\min}$ does not always have 
algebraic singularities (see Section 4).  
Further it is difficult to handle multiplier ideal sheaves  
of singular metric with non-algebraic 
(transcendental) singularities. 
For this reason, the main problem of this paper was 
an open problem even if $F$ is big.

It is an interesting question to ask whether or not 
$\mathcal{I}_{+}(\varphi)$ agrees with $\mathcal{I}(\varphi)$ 
for a psh function $\varphi$,  
which is called the strong openness conjecture (see \cite{DEL00}). 
It is a natural question, but it had been an open problem 
before Guan and Zhou solved the strong openness conjecture in \cite{GZ13} 
(see also \cite{Hie14} and \cite{Ber13}). 
Although their celebrated results imply the main theorem, 
our proof does not need the openness conjecture.


\section{Proof of the Main Results}
\subsection{Proof of Theorem \ref{main}}
In this subsection, 
we prove the following theorem, 
which leads to Theorem \ref{m-min} and Corollary \ref{main-co}. 

\begin{theo}\label{main}
Let $F$ be a big line bundle on a smooth projective 
variety $X$ and $h$ be 
a singular metric on $F$ 
with semi-positive curvature. 
Assume that
there exist a $($non-zero$)$  section $s$ of 
some positive multiple $F^{m}$ of $F$ and 
a singular metric $h_{\rbig}$ on $F$ satisfying 
the following conditions$:$ \\
\hspace{0.4cm}$(1)$ 
The metric $h_{\rbig}$ has strictly positive curvature.\\  
\hspace{0.4cm}$(2)$ 
The metric $h$ is less singular than $h_{\rbig}$.\\
\hspace{0.4cm}$(3)$ 
The pointwise norm $|s|_{h^m_{\rbig}}$ 
of $s$ with respect to $h^{m}_{\rbig}$ is bounded on $X$. 
\vspace{0.1cm}\\
Then we have 
\begin{equation*}
H^{i}(X, K_{X} \otimes F \otimes \I{h_{}}) = 0 
\quad {\text{for}}\ {\text{any}}\ i >0.
\end{equation*}
\end{theo}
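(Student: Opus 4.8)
The plan is to follow the four-step strategy outlined in the introduction. First I would set up the analytic machinery: apply the equisingular approximation theorem (Theorem~\ref{equi}) to $h$, obtaining metrics $\{h_\e\}$ smooth on $X\setminus Z_\e$ with $\I{h_\e}=\I{h}$ and $\sqrt{-1}\Theta_{h_\e}(F)\geq -\e\omega$. On the Zariski open set $Y_\e:=X\setminus Z_\e$ I would fix a complete K\"ahler metric (obtained, e.g., by adding a small multiple of $\sqrt{-1}\partial\dbar$ of a function with logarithmic poles along $Z_\e$ to a fixed K\"ahler form), and represent a given class in $H^{i}(X,K_X\otimes F\otimes\I{h})$ by its harmonic representative $u_\e\in L^{n,i}_{(2)}(Y_\e,F)_{h_\e,\omega}$, using a De~Rham--Weil type isomorphism between the cohomology of the $L^2$ Dolbeault complex and the sheaf cohomology with coefficients in $K_X\otimes F\otimes\I{h_\e}=K_X\otimes F\otimes\I{h}$.

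Second, I would estimate $\|D''^{*}_{h_\e}(su_\e)\|$, where $s$ is the section of $F^m$ from the hypotheses, so $su_\e$ is $F^{m+1}$-valued. Applying Proposition~\ref{Nak} to $su_\e$ with the metric $h_\e^{m+1}$ (twisted appropriately so the curvature term is controlled), and using that $u_\e$ is $\dbar$-harmonic together with $\dbar s=0$, the Bochner--Kodaira--Nakano identity forces $\|D''^{*}(su_\e)\|^2$ to be bounded by $\e$ times $\|u_\e\|^2$ plus a curvature contribution. Here the key point is that the bad part $-\e\omega$ of the curvature of $h_\e$ contributes only an $O(\e)$ term once we know $\|u_\e\|$ stays bounded, and that the strictly positive curvature of $h_{\rbig}$, combined with condition (2) (so $h\leq Ch_{\rbig}$, hence $h_\e\leq C h_{\rbig}$) and condition (3) (so $|s|_{h_{\rbig}^m}$ is bounded), lets us control everything. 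This is a generalization of Enoki's argument for Koll\'ar injectivity, where one there has genuine semi-positivity; the extra $-\e\omega$ is what must be absorbed, and making this absorption rigorous while keeping track of the completeness of $\omega$ is the step I expect to be the main obstacle.

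Third, I would solve the $\dbar$-equation. Since $su_\e$ is $\dbar$-closed (again using $\dbar s=0$ and $\dbar u_\e=0$) and represents a cohomology class killed by multiplication by $s$ — more precisely, $su_\e$ is $\dbar$-exact because the cohomology class of $u_\e$ lies in $H^{i}$ and $s$ gives a morphism into $H^{i}(X,K_X\otimes F^{m+1}\otimes\I{h^{m+1}_{\rbig}})$-type groups where vanishing or exactness can be arranged from strict positivity of $h_{\rbig}$ — I would produce $\beta_\e$ with $\dbar\beta_\e=su_\e$. Using H\"ormander-type $L^2$ estimates against $h_{\rbig}$ (strictly positive curvature) and condition (3) to bound $|s|$, the norm $\|\beta_\e\|$ should be uniformly bounded, crucially using that bigness of $F$ guarantees the relevant $L^2$-cohomology vanishes and the solution can be taken minimal; this is where ``the special characteristics of $h_{\min}$'' (really, of $h$ being less singular than a big metric) enter.

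Finally, I would combine the estimates:
\begin{equation*}
\|su_\e\|^2=\lla su_\e,\dbar\beta_\e\rra=\lla D''^{*}(su_\e),\beta_\e\rra\leq \|D''^{*}(su_\e)\|\,\|\beta_\e\|\to 0
\end{equation*}
as $\e\to 0$. From $\|su_\e\|\to 0$ and a uniform lower bound on $|s|$ away from its zero divisor (together with a normal-families/weak-limit argument for the $u_\e$, using that $\|u_\e\|$ is bounded so a subsequence converges weakly to some $u$), one deduces $su=0$ hence $u=0$ off the zero set of $s$, and since $u$ is $L^2$ this forces $u=0$; tracking that the limit still represents the original class yields that the class was zero, proving $H^{i}(X,K_X\otimes F\otimes\I{h})=0$. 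The delicate points in this last step are the passage to the limit of the harmonic forms (asymptotic behavior with respect to the family of regularized metrics, as advertised in the abstract) and checking that cohomology classes are preserved under the weak limit.
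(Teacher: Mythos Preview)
Your overall strategy matches the paper's four-step proof almost exactly. However, there is one technical point you miss that the paper handles carefully and that is essential for Step~4 to work: the Zariski open set must be taken \emph{independent of $\e$}. You propose to work on $Y_\e = X \setminus Z_\e$ with a complete K\"ahler metric built from logarithmic poles along $Z_\e$; but then both the domain and the Hilbert space $L^{n,i}_{(2)}(Y_\e,F)_{h_\e,\tilde\omega_\e}$ vary with $\e$, and the weak-limit argument you sketch in Step~4 (extracting a weakly convergent subsequence of $\{u_\e\}$ in a fixed $L^2$ space, then showing the limit is zero and hence the original class is $\dbar$-exact) cannot even be formulated. The paper fixes this by observing that conditions~(2) and~(3) force $|s|_{h^m}$ to be bounded, so the Lelong number set of $h$ is contained in the fixed subvariety $Z=\{s=0\}$; invoking the latter clause of Theorem~\ref{equi}, one may arrange $Z_\e \subset Z$ for all $\e$, and then work throughout on the single open set $Y = X\setminus Z$ with one complete K\"ahler metric $\tilde\omega$. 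With this in hand, for a fixed reference $\e_0$ one has $\|u_\e\|_{h_{\e_0},\tilde\omega} \leq \|u\|_{h,\omega}$ for all $\e < \e_0$, and the weak convergence takes place in the fixed Hilbert space $L^{n,i}_{(2)}(Y,F)_{h_{\e_0},\tilde\omega}$.

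A secondary point on Step~3: the paper does not argue sheaf-theoretically that $su_\e$ is $\dbar$-exact, nor does it invoke ``bigness'' abstractly. It constructs $\beta_\e$ directly by H\"ormander's $L^2$ estimates for the auxiliary metric $H_\e := h_{\rbig}^m\, h_\e\, e^{-\varphi}$ on $F^{m+1}$ (where $\varphi$ is the bounded potential used to build $\tilde\omega$). After replacing $s$ by a power so that $m$ is large enough, the curvature of $H_\e$ dominates $\tilde\omega$; condition~(3) makes $\|su_\e\|_{H_\e,\tilde\omega}$ uniformly bounded, and condition~(2) (namely $h_\e \leq h \leq C h_{\rbig}$) converts the resulting bound on $\|\beta_\e\|_{H_\e,\tilde\omega}$ into a uniform bound on $\|\beta_\e\|_{h_\e^{m+1},\tilde\omega}$.
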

\begin{rem}
If $h$ is a metric with minimal singularities or Siu's metric 
on a big line bundle $F$, we can construct $h_{\rbig}$ 
satisfying all conditions in the above theorem 
(see the proof of Corollary \ref{main-co}). 
\end{rem}
\textit{Proof of Theorem \ref{main}.}\ \ 
First of all, we will explain the idea of the proof.  
For simplicity, we first observe the case where $h$ is smooth on $X$ 
(or some Zariski open set of $X$). 
Take an arbitrary cohomology class
$\{u \} \in H^{i}(X, K_{X} \otimes F \otimes \I{h})$ 
represented by the $F$-valued 
$(n, i)$-form $u$ such that 
the norm $\|u \|_{h}$ is bounded and $u$ is harmonic.
If the norm $\| u \|_{h_{\rbig}}$ is bounded, 
we can construct a solution $\gamma \in 
L_{(2)}^{n, i-1}(X, F)_{h}$ of 
the $\dbar$-equation $ \dbar \gamma = u$ 
by assumptions (1), (2). 
This completes the proof. 
However we can never expect that 
$\|u \|_{\rbig}$ is bounded 
since $h_{\rbig}$ is more singular than $h$. 
Then we consider the $\dbar$-equation $\dbar \beta = su$ 
instead of $ \dbar \gamma = u$. 
Then the norm $\|su\|_{h_{\rbig}^{m}h}$ 
is bounded since 
the singularities of $h_{\rbig}$ are canceled by the zero of $s$ 
thanks to assumption (3).  
Further the metric $h_{\rbig}^{m}h$ has strictly positive curvature.
Therefore we can construct a solution 
$\beta \in L_{(2)}^{n, i-1}(X, F^{m+1})_{h^{m+1}}$ of 
the $\dbar$-equation $\dbar \beta = su$. 
On the other hand, $su$ can be shown to be harmonic 
since $u$ is harmonic and 
the curvature of $h$ is semi-positive. 
It follows from Enoki's technique 
for the injectivity theorem. 
From these arguments, we can conclude that $u$ is zero.

Unfortunately 
$h$ may not smooth even on Zariski open sets of $X$.
For this reason, in Step 1, 
we approximate a given singular metric $h$ by 
singular metrics $\{ h_{\e} \}_{\e>0}$
that are smooth on a Zariski open set. 
Then we need to investigate the asymptotic behavior 
of the harmonic forms that 
represent a given cohomology class 
since the positivity of the curvature of $h_{\e}$ 
can be lost. 
In this step, we  
fix the notation to apply the theory of harmonic integrals.  
At the end of this step, we give the sketch of the proof. 
\vspace{0.1cm}\\
{\bf{Step 1 (The equisingular approximation of $h$)}}
\vspace{0.1cm}\\
We can choose  a K\"ahler form $\omega$ on $X$ such that  
$\sqrt{-1} \Theta_{h_{\rbig}}(F) \geq \omega$ holds 
since the curvature of $h_{\rbig}$ is strictly positive. 
For the proof  
we apply the theory of harmonic integrals, 
but $h$ may not be smooth. 
For this reason, we need to approximate $h$ by metrics 
$\{ h_{\e} \}_{\e>0}$ that are smooth on a Zariski open set. 
By Theorem \ref{equi}, we can obtain 
metrics $\{ h_{\e} \}_{\e>0}$ on $F$ with the 
following properties:
\begin{itemize}
\item[(a)] $h_{\e}$ is smooth on $X \setminus Z_{\e}$, where $Z_{\e}$ is a subvariety on $X$.
\item[(b)] $h_{\e_{2}} \leq h_{\e_{1}} \leq h$ holds for any 
$0< \e_{1} < \e_{2} $.
\item[(c)] $\I{h}= \I{h_{\e}}$.
\item[(d)] $\sqrt{-1} \Theta_{h_{\e}}(F) \geq -\e \omega$.
\end{itemize}
By assumptions (2), (3), 
the pointwise norm $|s|_{h^{m}}$ is also bounded on $X$,  
and thus the set $\{x \in X \mid \nu(\varphi, x)>0\}$ 
is contained in the subvariety 
$Z:=\{z \in X \mid s(x)=0 \}$,  
where $\nu(\varphi, x)$ denotes the Lelong number  at $x \in X$ 
of a local weight $\varphi$ of $h$. 
From the latter conclusion of Theorem \ref{equi} 
we know that $h_{\e}$ is smooth on $X \setminus Z$. 
Therefore   
we can add a stronger property than property $(a)$, namely
\begin{itemize}
\item[(e)] $h_{\e}$ is smooth on $Y:=X \setminus Z$, 
where Z is a subvariety independent of $\e$. 
\end{itemize}

Now we construct a complete K\"ahler metric on $Y$ 
with suitable potential function. 
Take a quasi-psh function $\psi$ on $X$ such that  
$\psi$ is smooth on $Y$ and 
$\psi$ has a logarithmic pole along $Z$.
Since quasi-psh functions are upper semi-continuous, 
we may assume $\psi \leq -e$.
Then we define the function $\varphi$ and 
the $(1,1)$-form $\ome$ on $Y$ by 
\begin{equation*}
\varphi:=\frac{1}{\log(-\psi)}\quad \text{and}\quad  
\ome:= \ell \omega + \ddbar \varphi, 
\end{equation*}
where $\ell$ is a positive number. 
By taking a sufficiently large $\ell >0$, 
we can easily see that   
the $(1,1)$-form $\ome$ satisfies the following properties:  
\begin{itemize}
\item[(A)] $\ome$ is a complete K\"ahler metric on $Y$.
\item[(B)] $\varphi$ is bounded on $X$.
\item[(C)] $\ome \geq \omega $.
\end{itemize}
In fact, properties (B), (C) are obvious. 
Property (A) follows from straightforward computations 
(see \cite[Lemma 3.1]{Fuj12-A}).

In the proof, we consider 
the harmonic forms on $Y$ with respect to 
$h_{\e}$ and $\ome$ (not $h$ and $\omega$). 
Let $L_{(2)}^{n, i}(Y, F)_{h_{\e}, \ome}$ be 
the space of square integrable $F$-valued $(n,i)$-forms 
$\alpha$ with respect to the inner product defined by 
\begin{equation*}
\|\alpha \|^{2}_{h_\e, \ome}:= \int_{Y} 
|\alpha |^{2}_{h_{\e}, \ome}\ \frac{\ome^{n}}{n!}. 
\end{equation*}
Then we have the orthogonal decomposition 
\begin{equation*}
L_{(2)}^{n, i}(Y, F)_{h_{\e}, \ome}
=
{\rm{Im}} \dbar
\oplus
\mathcal{H}^{n, i}(Y, F)_{h_{\e}, \ome}
\oplus {\rm{Im}} D^{''*}_{h_{\e}}.  
\end{equation*}
Here the operators $D^{'*}_{h_{\e}}$
$D^{''*}_{h_{\e}}$ are
the Hilbert space adjoints of 
$D^{'}_{h_{\e}}$ and $D^{''}_{h_{\e}}=\dbar$. 
(Since $\ome$ is complete, these coincide with 
the closed extensions of the formal adjoints in the sense of distributions.) 
The space $\mathcal{H}^{n, i}(Y, F)_{h_{\e}, \ome}$ denotes  
the space of harmonic forms with respect to 
$h_{\e}$ and $\ome$, 
namely 
\begin{equation*}
\mathcal{H}^{n, i}(Y, F)_{h_{\e}, \ome}= 
\{\alpha   \mid \alpha 
\text{ is an } F\text{-valued } (n,i)\text{-form such that  }
\dbar \alpha= D^{''*}_{h_{\e}}\alpha=0    \}. 
\end{equation*}
Harmonic forms are smooth by the regularization theorem 
for elliptic operators. 
These facts may be known to specialists.  
The precise proof can be found in 
\cite[Claim 1]{Fuj12-A}.

From property (C) 
we have 
the inequality 
$|\beta|^{2}_{\ome}\ \ome^{n} \leq 
|\beta|^{2}_{\omega}\ \omega^{n}$  for any $(n, i)$-form $\beta$. 
From this inequality and property (b) of $h_{\e}$,
we obtain 
\begin{equation}\label{ine}
\|\alpha \|_{h_{\e}, \ome} \leq 
\|\alpha \|_{h_{\e}, \omega} \leq 
\|\alpha \|_{h, \omega}
\end{equation}
for any $F$-valued $(n,i)$-form $\alpha$. 
This inequality is often used in the proof.

Take an arbitrary cohomology class
$\{u \} \in H^{i}(X, K_{X} \otimes F \otimes \I{h})$ 
represented by an $F$-valued 
$(n, i)$-form $u$ with $\|u \|_{h, \omega} < \infty$. 
Our goal is to show that 
the cohomology class 
$\{u \}$ is actually zero.  
By inequality (\ref{ine}), we know $u \in L_{(2)}^{n, i}(Y, F)_{h_{\e}, \ome}$ for any $\e > 0$. 
By the orthogonal decomposition above, 
there exist  
$u_{\e} \in \mathcal{H}^{n, i}(Y,F)_{h_{\e}, \ome}$ and 
$v_{\e} \in L_{(2)}^{n,i-1}(Y, F)_{h_{\e}, \ome}$ such that 
\begin{equation*}
u=u_{\e}+\dbar v_{\e}. 
\end{equation*}
Notice that  
the component of ${\rm{Im}} D^{''*}_{h_{\e}}$ is zero 
since $u$ is $\dbar$-closed.

The strategy of the proof is as follows: 
In Step 2, we show 
\begin{equation*}
\|D^{''*}_{h^{m+1}_{\e}} s u_{\e} \|_{h^{m+1}_{\e}, \ome} 
\to 0  \quad {\text{as }} \e \to 0.
\end{equation*} 
This step can be regarded as a generalization of  
Enoki's proof for Koll\'ar's injectivity theorem. 
In Step 3, 
we construct a solution $\beta_{\e}$ of the $\dbar$-equation 
$\dbar \beta_{\e} = s u_{\e}$ such that 
the norm $\| \beta_{\e} \|_{h^{m+1}_{\e}, \ome}$ 
is uniformly bounded. 
By these steps, we know    
\begin{equation*}
\| su_{\e} \|_{h^{m+1}_{\e}, \ome} ^{2} = 
\lla su_{\e}, \dbar \beta_{\e} \rra_{h^{m+1}_{\e}, \ome}
\leq \| D^{''*}_{h^{m+1}_{\e}} su_{\e}\|_{h^{m+1}_{\e}, \ome}
\| \beta_{\e} \|_{h^{m+1}_{\e}, \ome} \to 0 \quad {\text{as }} \e \to 0.
\end{equation*}
In Step 4, we show that 
$u_{\e}$ converges to zero in a suitable sense. 
This completes the proof. 
\vspace{0.1cm}\\
{\bf{Step 2 (A generalization of Enoki's proof 
of the injectivity theorem)}}\\
The aim of this step is to prove the following claim. 
\vspace{-0.1cm}
\begin{claim} \label{D''}
The norm 
$\|D^{''*}_{h^{m+1}_{\e}} s u_{\e} \|_{h^{m+1}_{\e}, \ome}$ converges to zero as letting $\e$ go to zero. 
\end{claim}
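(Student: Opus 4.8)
The plan is to invoke Proposition~\ref{Nak} twice, all computations taking place on the complete K\"ahler manifold $(Y,\ome)$, on which $h_{\e}$ and $h^{m+1}_{\e}$ are smooth. In Enoki's semi-positive setting one would conclude that $su_{\e}$ is \emph{exactly} harmonic; here the curvature of $h_{\e}$ is only bounded below by $-\e\omega$, so $su_{\e}$ will be merely \emph{asymptotically} harmonic, and the point is to make this quantitative. I would first record that $\dbar(su_{\e})=0$ (as $s$ is holomorphic and $\dbar u_{\e}=0$) and dispose of the domain questions: $u_{\e}$ is smooth and harmonic, and one checks by the standard density argument on the complete manifold $(Y,\ome)$ (cf.\ \cite{Fuj12-A}, \cite{Dem-book}) that $su_{\e}\in\mathrm{Dom}\,D^{''*}_{h^{m+1}_{\e}}\cap\mathrm{Dom}\,\dbar$.

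\textbf{Step A.} First I apply Proposition~\ref{Nak} to $u_{\e}$ with $(E,h)=(F,h_{\e})$; its hypothesis holds since $\sqrt{-1}\Theta_{h_{\e}}(F)\geq-\e\omega\geq-\e\ome$ by properties~(d) and~(C). As $u_{\e}$ is harmonic the left-hand side vanishes, giving
\begin{equation*}
\|D^{'*}_{h_{\e}}u_{\e}\|^{2}_{h_{\e},\ome}
= -\lla \sqrt{-1}\Theta_{h_{\e}}(F)\Lambda_{\ome}u_{\e},u_{\e}\rra_{h_{\e},\ome}.
\end{equation*}
Since $\sqrt{-1}\Theta_{h_{\e}}(F)+\e\ome$ is a semi-positive $(1,1)$-form, Nakano's inequality for $(n,i)$-forms lets me split the integrand on the right into a pointwise non-negative term and the error $-\e\langle\ome\Lambda_{\ome}u_{\e},u_{\e}\rangle$. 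Combining this with the uniform bound $\|u_{\e}\|_{h_{\e},\ome}\leq\|u\|_{h,\omega}$ (from the orthogonal decomposition and inequality~(\ref{ine})), I conclude that both $\|D^{'*}_{h_{\e}}u_{\e}\|^{2}_{h_{\e},\ome}$ and $\int_{Y}|\langle\sqrt{-1}\Theta_{h_{\e}}(F)\Lambda_{\ome}u_{\e},u_{\e}\rangle_{h_{\e},\ome}|\,\ome^{n}$ tend to $0$ as $\e\to0$.

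\textbf{Step B.} Next, $M:=\sup_{X}|s|_{h^{m}}<\infty$ by assumptions~(2) and~(3), and $|s|_{h^{m}_{\e}}\leq|s|_{h^{m}}$ by property~(b), so $\|su_{\e}\|_{h^{m+1}_{\e},\ome}\leq M\|u\|_{h,\omega}$. Applying Proposition~\ref{Nak} to $su_{\e}$ with $(E,h)=(F^{m+1},h^{m+1}_{\e})$ (whose hypothesis holds as $\sqrt{-1}\Theta_{h^{m+1}_{\e}}(F^{m+1})=(m+1)\sqrt{-1}\Theta_{h_{\e}}(F)\geq-(m+1)\e\ome$) and using $\dbar(su_{\e})=0$ gives
\begin{equation*}
\|D^{''*}_{h^{m+1}_{\e}}(su_{\e})\|^{2}_{h^{m+1}_{\e},\ome}
=\|D^{'*}_{h^{m+1}_{\e}}(su_{\e})\|^{2}_{h^{m+1}_{\e},\ome}
+\lla \sqrt{-1}\Theta_{h^{m+1}_{\e}}(F^{m+1})\Lambda_{\ome}(su_{\e}),su_{\e}\rra_{h^{m+1}_{\e},\ome}.
\end{equation*}
For the first term on the right, the K\"ahler identity $D^{'*}_{h^{m+1}_{\e}}=\sqrt{-1}[\Lambda_{\ome},\dbar]$ together with $\dbar(su_{\e})=0$, $\dbar u_{\e}=0$ and $D^{''*}_{h_{\e}}u_{\e}=0$ yields the pointwise identity $D^{'*}_{h^{m+1}_{\e}}(su_{\e})=s\,D^{'*}_{h_{\e}}u_{\e}$, so this term equals $\int_{Y}|s|^{2}_{h^{m}_{\e}}|D^{'*}_{h_{\e}}u_{\e}|^{2}_{h_{\e},\ome}\,\ome^{n}\leq M^{2}\|D^{'*}_{h_{\e}}u_{\e}\|^{2}_{h_{\e},\ome}\to0$ by Step~A. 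For the curvature term, since the contraction operator touches only the form part, a pointwise computation in a local trivialization gives
\begin{equation*}
\lla \sqrt{-1}\Theta_{h^{m+1}_{\e}}(F^{m+1})\Lambda_{\ome}(su_{\e}),su_{\e}\rra_{h^{m+1}_{\e},\ome}
=(m+1)\int_{Y}|s|^{2}_{h^{m}_{\e}}\,\langle \sqrt{-1}\Theta_{h_{\e}}(F)\Lambda_{\ome}u_{\e},u_{\e}\rangle_{h_{\e},\ome}\,\ome^{n},
\end{equation*}
whose absolute value is $\leq(m+1)M^{2}$ times the integral in Step~A, hence tends to $0$. Adding the two, $\|D^{''*}_{h^{m+1}_{\e}}(su_{\e})\|_{h^{m+1}_{\e},\ome}\to0$, which is the claim.

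\textbf{The main obstacle} is precisely the curvature term in the second application of Proposition~\ref{Nak}: because $h_{\e}$ is not semi-positive, it need not be non-negative, so Enoki's argument that $su_{\e}$ is genuinely harmonic breaks down. It is saved by the observation that its only possibly negative part is the $O(\e)$ contribution of $-\e\ome$, whereas its positive part has small integral \emph{because $u_{\e}$ is harmonic} (Step~A), and weighting by the bounded function $|s|^{2}_{h^{m}_{\e}}\leq M^{2}$ preserves this smallness. The remaining points---the membership $su_{\e}\in\mathrm{Dom}\,D^{''*}_{h^{m+1}_{\e}}$ and the pointwise identities---are routine in the complete-K\"ahler framework.
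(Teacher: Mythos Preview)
Your proof is correct and follows essentially the same route as the paper: apply Proposition~\ref{Nak} first to $u_{\e}$ (using harmonicity to make the left side vanish) to control $\|D^{'*}_{h_{\e}}u_{\e}\|$ and the curvature integrand, then apply it again to $su_{\e}$ and use the pointwise identities $D^{'*}_{h^{m+1}_{\e}}(su_{\e})=s\,D^{'*}_{h_{\e}}u_{\e}$ and $\langle\Theta_{h^{m+1}_{\e}}\Lambda(su_{\e}),su_{\e}\rangle=(m+1)|s|^{2}_{h^{m}_{\e}}g_{\e}$. The only difference is bookkeeping: the paper splits $\int g_{\e}$ over the sets $\{g_{\e}\geq 0\}$ and $\{g_{\e}\leq 0\}$ to show the positive and negative parts each tend to zero, whereas you decompose the curvature as $(\Theta_{h_{\e}}+\e\ome)-\e\ome$ and directly control $\int_{Y}|g_{\e}|\,\ome^{n}$; this is slightly cleaner but amounts to the same estimate.
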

\hspace{-0.6cm}
\textit{Proof of Claim \ref{D''}.}\ \ 
From the definition of  $u_{\e}$ and 
inequality (\ref{ine}), we have 
\begin{equation}\label{ine2}
\|u_{\e} \|_{h_{\e}, \ome} 
\leq \|u \|_{h_{\e}, \ome} 
\leq \|u \|_{h, \omega}.
\end{equation}
In the proof, these inequalities play an important role. 
By applying Proposition \ref{Nak} to $u_{\e}$, 
we obtain 
\begin{equation}\label{B-eq}
0 = \lla \sqrt{-1}\Theta_{h_{\e}}(F)
\Lambda_{\ome} u_{\e}, u_{\e}
  \rra_{h_{\e}, \ome} +
\|D^{'*}_{h_{\e}}u_{\e} \|^{2}_{h_{\e}, \ome}. 
\end{equation}
Note that the left hand side is zero since $u_{\e}$ is harmonic.
Let $A_{\e}$ be the first term and $B_{\e}$ 
the second term of the right hand side of equality (\ref{B-eq}). 
From now on, 
we will show that the first term $A_{\e}$ and 
the second term $B_{\e}$ converge to zero. 
For simplicity, 
we put  
\begin{equation*}
g_{\e}:= \langle  \sqrt{-1}\Theta_{h_{\e}}(F)
\Lambda_{\ome} u_{\e}, u_{\e}
 \rangle_{h_{\e}, \ome}. 
\end{equation*}
Then we can easily see that  
there exists a positive constant $C$ such that
\begin{equation}\label{ine3}
g_{\e} \geq -\e C |u_{\e}|^{2}_{h_{\e}, \ome}. 
\end{equation}
Indeed, let $\lambda_{1}^{\e} \leq \lambda_{2}^{\e} \leq 
\dots \leq \lambda_{n}^{\e} $ be the 
eigenvalues of $\sqrt{-1}\Theta_{h_{\e}}(F)$ with respect to 
$\ome$. 
Then for any point $y \in Y$, there exists 
a local coordinates $(z_{1}, z_{2}, \dots, z_{n})$ 
centered at $y$ such that 
\begin{align*}
\sqrt{-1}\Theta_{h_{\e}}(F) = \sum_{j=1}^{n} 
\lambda_{j}^{\e} dz_{j} \wedge d\overline{z_{j}}
\quad \text{and} \quad 
\ome = \sum_{j=1}^{n} 
 dz_{j} \wedge d\overline{z_{j}}
\quad {\rm{ at}}\ y. 
\end{align*}
When we locally write $u_{\e}$ as 
$u_{\e} =\sum_{|K|=i} f_{K}^{\e}\ dz_{1}\wedge \dots \wedge dz_{n} 
\wedge d\overline{z}_{K}$, 
we have  
\begin{equation*}
g_{\e}= \sum_{|K|=i} 
\Big{(} \sum_{j \in K} \lambda_{j}^{\e} \Big{)} 
|f_{K}^{\e}|^{2}_{h_{\e}}
\end{equation*}
by straightforward computations. 
On the other hand, from 
property (C) of $\ome$ and property (d) of $h_{\e}$, 
we have  
$\sqrt{-1}\Theta_{h_{\e}}(F) 
\geq -\e \omega 
\geq -\e \ome$, 
which implies that $\lambda_{j}^{\e} \geq -\e$.
Therefore we obtain inequality (\ref{ine3}).

From inequalities (\ref{ine2}), (\ref{ine3}) 
and equality (\ref{B-eq}), 
we obtain 
\begin{align*}
0 \geq A_{\e} &= \int_{Y} g_{\e}\ \ome^{n} \\
& \geq -\e C \int_{Y} |u_{\e}|^{2}_{h_{\e}, \ome}\ \ome^{n}\\
& \geq -\e C \|u \|^{2}_{h, \omega}. 
\end{align*}
Hence $A_{\e}$ converges to zero. 
By equality (\ref{B-eq}), $B_{\e}$ also converges to zero.

We will apply Proposition \ref{Nak} to $su_{\e}$. 
We first see that 
the norm $\|su_{\e} \|_{h^{m+1}_{\e}, \ome}$ is bounded. 
By assumptions (2), (3), 
the pointwise norm $|s|_{h^{m}}$ with respect to $h^{m}$ is 
bounded. 
Further, we have $|s|_{h_{\e}^{m}} \leq |s|_{h^{m}}$
from property (b) of $h_{\e}$,  
thus we obtain 
\begin{equation*}
\|s u_{\e} \|_{h_{\e}^{m+1}, \ome} \leq 
\sup_{X} |s|_{h_{\e}^{m}}  \|u_{\e} \|_{h_{\e}, \ome}  \leq
\sup_{X} |s|_{h^{m}}  \|u \|_{h, \omega} < \infty. 
\end{equation*}
By applying Proposition \ref{Nak} to $su_{\e}$, 
we obtain 
\begin{equation}\label{B-eq2}
\|D^{''*}_{h_{\e}^{m+1}}
su_{\e} \|^{2}_{h^{m+1}_{\e}, \ome} = \lla \sqrt{-1}\Theta_{h^{m+1}_{\e}}(F^{m+1})
\Lambda_{\ome} su_{\e}, su_{\e}
  \rra_{h^{m+1}_{\e}, \ome} +
\|D^{'*}_{h_{\e}^{m+1}}su_{\e} \|^{2}_{h^{m+1}_{\e}, \ome}.
\end{equation}
Here we used $\dbar s u_{\e}=0$. 
First we  
see that  
the second term of the right hand side converges to zero. 
Since $s$ is a holomorphic section, we can check   
the equality $D^{'*}_{h^{m+1}_{\e}}su_{\e} =
s D^{'*}_{h_{\e}}u_{\e}$, 
which  yields 
\begin{equation*}
\|D^{'*}_{h_{\e}^{m+1}}su_{\e} \|^{2}_{h^{m+1}_{\e}, \ome} \leq
\sup_{X}|s|^{2}_{h_{\e}^{m}}  \int_{Y}
|D^{'*}_{h_{\e}}u_{\e} |^{2}_{h_{\e}, \ome}\ \ome^{n} 
\leq \sup_{X}|s|^{2}_{h^{m}} B_{\e}. 
\end{equation*}
Since $B_{\e}$ converges to zero, 
the norm $\|D^{'*}_{h^{m+1}_{\e}}su_{\e} \|_{h^{m+1}_{\e}, \ome}$ also converges to zero.

For the proof of the claim, 
it remains to show that 
the first term of the right hand side of equation (\ref{B-eq2})
converges to zero. 
For this purpose, 
we investigate $A_{\e}$ in details. 
By the definition of $A_{\e}$, we have
\begin{equation*}
A_{\e}= \int_{\{ g_{\e} \geq 0 \}} g_{\e}\ \ome^{n} + 
\int_{\{ g_{\e} \leq 0 \}} g_{\e}\ \ome^{n}. 
\end{equation*}
Let $A^{+}_{\e}$ be the first term and 
$A^{-}_{\e}$ be the second term of the right hand side. 
Then  
$A^{+}_{\e}$ and $A^{-}_{\e}$ converge to zero. 
Indeed, by simple computations and 
inequalities (\ref{ine2}), (\ref{ine3}), 
we have 
\begin{align*}
0 \geq A^{-}_{\e} &\geq 
-\e C \int_{\{ g_{\e} \leq 0 \}}|u_{\e}|^{2}_{h_{\e}, \ome}\ 
\ome^{n} \\
&\geq -\e C \int_{Y}|u_{\e}|^{2}_{h_{\e}, \ome}\ 
\ome^{n} \\
&\geq -\e C \| u \|^{2} _{h, \omega}. 
\end{align*}
From this inequality, we know that $A^{+}_{\e}$ and $A^{-}_{\e}$ go to zero 
by $A_{\e} = A^{+}_{\e} + A^{-}_{\e}$. 
Combining
this convergence with the fact that $|s|_{h^m_{\e}}$
is uniformly bounded, the first term of the right
hand side of equality (\ref{B-eq2}) tends to zero. 
This completes the proof. 
\qed 
\vspace{0.1cm}\\
{\bf{Step 3 (Solutions of the $\dbar$-equation 
with uniformly bounded $L^{2}$-norms)}}
\vspace{0.1cm}\\ 
A positive multiple $s^{k}$ of the section $s$ also 
satisfies assumption (3),  
and thus we may assume that $m$ is greater than $\ell$.
Under this additional assumption, 
we prove 
the following claim.  

\begin{claim} \label{sol}
For every $\e>0$, there exists an  
$F^{m}$-valued $(n, i-1)$-form $\beta_{\e}$ 
such that 
\begin{equation*}
\dbar \beta_{\e} = su_{\e} \quad \text{and} \quad 
\ \| \beta_{\e} \|_{h^{m+1}_{\e}, \ome} \text{ is uniformly bounded.}
\end{equation*}
\end{claim}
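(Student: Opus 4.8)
The plan is to construct $\beta_\e$ by solving the $\dbar$-equation $\dbar\beta_\e = su_\e$ on the complete Kähler manifold $(Y,\ome)$ using a standard $L^2$-existence theorem (Hörmander–Demailly), applied to the line bundle $F^{m+1}$ equipped with a metric obtained from $h_\e$ by twisting with the strictly positive metric $h_\rbig$. The key point is that the uniform bound on $\|\beta_\e\|$ must come from a curvature lower bound that is independent of $\e$ and from a bound on the right-hand side that is again independent of $\e$; both will be arranged by exploiting assumptions (1), (2), (3) and the extra normalization $m > \ell$.

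First I would choose the metric on $F^{m+1}$ in which to solve the equation. The naive choice $h_\e^{m+1}$ has curvature only $\geq -(m+1)\e\,\omega$, which is not enough. Instead I would twist: write $F^{m+1} = F \otimes F^{m}$ and put the metric $H_\e := h_\e \cdot h_\rbig^{m}$ on it. Since $h$ (hence $h_\e$, by property (b)) is less singular than $h_\rbig$, and $|s|_{h_\rbig^m}$ is bounded by assumption (3), the pointwise norm $|su_\e|_{H_\e,\ome}$ is controlled: indeed $|su_\e|_{H_\e} = |s|_{h_\rbig^m}\,|u_\e|_{h_\e} \leq (\sup_X|s|_{h_\rbig^m})\,|u_\e|_{h_\e}$, so by inequality (\ref{ine2}), $\|su_\e\|_{H_\e,\ome} \leq C_0 \|u\|_{h,\omega}$ uniformly in $\e$. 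For the curvature, $\sqrt{-1}\Theta_{H_\e}(F^{m+1}) = \sqrt{-1}\Theta_{h_\e}(F) + m\sqrt{-1}\Theta_{h_\rbig}(F) \geq -\e\omega + m\omega \geq (m-1)\omega \geq \omega$ for $\e<1$, a strictly positive lower bound independent of $\e$. Then the Bochner–Kodaira–Nakano inequality on the complete Kähler manifold $(Y,\ome)$, combined with $\ome \geq \omega$ (property (C)), gives, for the $\dbar$-closed form $su_\e$, an estimate of the form $\langle (\sqrt{-1}\Theta_{H_\e}(F^{m+1})\Lambda_{\ome})^{-1} su_\e, su_\e\rangle \leq \langle su_\e, su_\e\rangle_{H_\e,\ome}$ on the image of $\dbar$, so the usual $L^2$-machinery produces $\beta_\e \in L^{n,i-1}_{(2)}(Y,F^{m+1})_{H_\e,\ome}$ with $\dbar\beta_\e = su_\e$ and $\|\beta_\e\|_{H_\e,\ome}^2 \leq C_1\|su_\e\|_{H_\e,\ome}^2 \leq C_1 C_0^2\|u\|_{h,\omega}^2$, uniformly in $\e$. (The role of $m>\ell$ is to guarantee that after passing from $\omega$ to $\ome = \ell\omega + \ddbar\varphi$ the curvature twist still dominates, so that $\sqrt{-1}\Theta_{H_\e}(F^{m+1})$ stays $\geq$ a fixed multiple of $\ome$ where needed; this is where one absorbs the $\ddbar\varphi$ term using the logarithmic pole of $\psi$.)

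Finally I would convert this $H_\e$-bound into the asserted $h_\e^{m+1}$-bound. Since $h$ is less singular than $h_\rbig$ we have $h_\rbig \leq C h$, hence $h_\rbig^m \leq C^m h^m$ and so $H_\e = h_\e h_\rbig^m \leq C^m h_\e h^m$; combined with $h_\e \leq h$ (property (b)) this gives $H_\e \leq C^m h^{m+1}$, but we need the comparison the other way for forms, i.e. a bound $\|\beta_\e\|_{h_\e^{m+1},\ome} \leq C' \|\beta_\e\|_{H_\e,\ome}$, which follows from $h_\e^{m+1} \leq H_\e$ on the support where $|s|_{h_\rbig^m}$ is bounded below — more carefully, one notes $\beta_\e$ is a section of $F^m\otimes K_X\otimes\dots$ and the $h_\e^{m+1}$-norm and the $H_\e$-norm differ by a factor comparable to $|s|_{h_\rbig^m}/|s|_{h_\e^m}$, which is bounded. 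Alternatively, and more cleanly, I would from the start solve the equation directly in the metric $h_\e^{m+1}$ after checking that assumption (3) together with (2) forces $h_\e^{m+1}$ itself, twisted appropriately by $|s|^2$, to have the right positivity; this is the standard trick "the singularities of $h_\rbig$ are canceled by $s$" referred to in the sketch of the proof of Theorem \ref{main}. The main obstacle is precisely this bookkeeping: ensuring that a \emph{single} choice of metric on $F^{m+1}$ simultaneously (i) has $\e$-independent strictly positive curvature, (ii) makes $\|su_\e\|$ $\e$-uniformly bounded, and (iii) yields a solution whose $h_\e^{m+1}$-norm — not merely its norm in the auxiliary metric — is $\e$-uniformly bounded; the completeness of $\ome$ and property (C) are what make the Hörmander estimate legitimate on the noncompact $Y$.
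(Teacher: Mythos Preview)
Your overall strategy matches the paper's: twist the metric on $F^{m+1}$ by $h_\rbig^m$, solve the $\dbar$-equation on $(Y,\ome)$ with a H\"ormander--Demailly estimate, then convert the bound back to the $h_\e^{m+1}$-norm. But there is a genuine gap in the curvature step. With your choice $H_\e = h_\e\, h_\rbig^m$ the curvature satisfies $\sqrt{-1}\Theta_{H_\e}(F^{m+1}) \geq (m-\e)\omega$, a lower bound in terms of $\omega$, whereas the $L^2$-existence theorem on the complete K\"ahler manifold $(Y,\ome)$ requires a lower bound in terms of $\ome$. Since $\ome = \ell\omega + \ddbar\varphi$ is complete and hence blows up near $Z$ while $\omega$ stays bounded, \emph{no} multiple of $\omega$ dominates $\ome$; so your parenthetical assertion that ``$m>\ell$ guarantees $\sqrt{-1}\Theta_{H_\e}$ stays $\geq$ a fixed multiple of $\ome$'' is false as written. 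The remedy---which is exactly what you gesture at without writing down---is to insert the bounded potential $\varphi$ into the metric: set $H_\e := h_\rbig^m\, h_\e\, e^{-\varphi}$. This adds $\ddbar\varphi$ to the curvature, and then
\[
\sqrt{-1}\Theta_{H_\e}(F^{m+1}) \;\geq\; m\omega - \e\omega + \ddbar\varphi \;\geq\; \ell\omega + \ddbar\varphi \;=\; \ome
\]
precisely when $m>\ell$ (and $\e<1$). Since $\varphi$ is bounded (property (B)), the extra factor $e^{-\varphi}$ is harmless in every norm comparison. This is exactly how the paper proceeds.

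Your norm-conversion paragraph is also tangled. ``$h$ less singular than $h_\rbig$'' means $h \leq C\, h_\rbig$, not $h_\rbig \leq C\, h$ as you write. What is actually needed, and what does hold, is $h_\e \leq h \leq C_3\, h_\rbig$, so that $h_\e^{m+1} = h_\e^m\cdot h_\e \leq C_3^m\, h_\rbig^m\, h_\e$; together with $C_1 \leq e^{-\varphi} \leq C_2$ this gives $h_\e^{m+1} \leq C_3^m C_1^{-1} H_\e$ and hence the desired uniform bound $\|\beta_\e\|_{h_\e^{m+1},\ome} \leq C\, \|\beta_\e\|_{H_\e,\ome}$. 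The detour via $|s|_{h_\rbig^m}/|s|_{h_\e^m}$ is both unnecessary and incorrect: that ratio is not bounded above where $h_\rbig$ is singular.
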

\hspace{-0.6cm}
\textit{Proof of Claim \ref{sol}.}\ \ 
To construct solutions of the $\dbar$-equation 
$ \dbar \beta_{\e} = su_{\e} $, 
we apply \cite[Th\'eor\`em 4.1]{Dem82}. 
For this purpose, 
we consider the new metric $H_{\e}$ on $F^{m+1}$ defined by 
$H_{\e}:= h^{m}_{\rbig}h_{\e} e^{-\varphi}$.
First we compute the curvature of  
$H_{\e}$. 
We have  
\begin{align*}
\sqrt{-1}\Theta_{H_{\e}}(F^{m+1}) 
&= m \sqrt{-1} \Theta_{h_{\rbig}}(F)+ \sqrt{-1}\Theta_{h_{\e}}(F)+ \ddbar \varphi \\
&\geq 
m \omega - \e \omega + \ddbar \varphi. 
\end{align*}
By the definition of $\ome$ and 
the additional assumption of $m>\ell$, 
we obtain $\sqrt{-1}\Theta_{H_{\e}}(F^{m+1}) \geq \ome$. 
Now we consider the norm of $su_{\e}$ with respect to 
$H_{\e}$. 
By assumption (3), 
the pointwise norm $|s|_{h^{m}_{big}}$ is bounded on $X$.
Further, since $\varphi$ is bounded on $X$, there 
exist positive constants $C_{1}$ and $C_{2}$ 
such that $C_{1} \leq e^{-\varphi} \leq C_{2}$ on $X$. 
Then by inequality (\ref{ine2}) we obtain  
\begin{align*}
\|su_{\e} \|_{H_{\e}, \ome} 
&\leq C_{2} 
\sup_{X} |s|_{h^{m}_{\rbig}} \|u_{\e} \|_{h_{\e}, \ome} \\
&\leq C_{2}  \sup_{X} |s|_{h^{m}_{\rbig}} 
 \|u \|_{{h}, \omega}.
\end{align*} 
In particular, the norm $\|su_{\e} \|_{H_{\e}, \ome} $ 
is bounded. 
Moreover, the right hand side does not depend on $\e$. 
By applying \cite[Th\'eor\`em 4.1]{Dem82}, 
we can find a solution $\beta_{\e}$ 
of the $\dbar$-equation $ \dbar \beta_{\e} = su_{\e}$ 
with 
\begin{equation*}
\|\beta_{\e} \|^{2}_{H_{\e}, \ome} \leq 
\frac{1}{i}\|su_{\e} \|^{2}_{H_{\e}, \ome}.  
\end{equation*}
Since $h$ is less singular than $h_{\rbig}$, 
there exists a positive constant $C_{3} > 0$ such that 
$h_{\e} \leq h \leq C_{3} h_{\rbig}$. 
Then we can easily see 
\begin{equation*}
C^{-m}_{3} C_{1} 
\|\beta_{\e} \|_{h^{m+1}_{\e}, \ome} \leq
\|\beta_{\e} \|_{H_{\e}, \ome}. 
\end{equation*}
Since the right hand side 
can be estimated  by the constant independent of $\e$,   
the norm $\|\beta_{\e} \|_{h^{m+1}_{\e}, \ome}$ is uniformly bounded. 
This completes the proof. 
\qed 
\vspace{0.1cm}\\
{\bf{Step 4 (The limit of harmonic forms)}}
\vspace{0.1cm}\\ 
In this step, we investigate the limit of $u_{\e}$ and 
complete the proof of Theorem \ref{main}. 
First we prove the following claim. 
\begin{claim}\label{converge}
The norm $\| s u_{\e}\|_{h^{m+1}_{\e}, \ome}$ 
converges to zero as $\e \to \infty$. 
\end{claim}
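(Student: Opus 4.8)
\textbf{Proof proposal for Claim \ref{converge}.}
The plan is to combine the three ingredients already assembled in Steps 2 and 3. From Step 2 (Claim \ref{D''}) we have
\begin{equation*}
\|D^{''*}_{h^{m+1}_{\e}} s u_{\e} \|_{h^{m+1}_{\e}, \ome} \to 0 \quad \text{as } \e \to 0,
\end{equation*}
and from Step 3 (Claim \ref{sol}) we have a solution $\beta_{\e}$ of $\dbar \beta_{\e} = s u_{\e}$ with $\|\beta_{\e}\|_{h^{m+1}_{\e}, \ome}$ uniformly bounded by a constant independent of $\e$. First I would note that $s u_{\e}$ lies in the image of $\dbar$, so it is orthogonal to the harmonic space; more to the point, since $u_{\e}$ is harmonic and $s$ is holomorphic, $\dbar(s u_{\e}) = 0$, so $s u_{\e}$ is $\dbar$-closed and we may pair it with $\dbar \beta_{\e}$.

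The key computation is the chain of (in)equalities displayed in the sketch at the end of Step 1:
\begin{equation*}
\| su_{\e} \|_{h^{m+1}_{\e}, \ome} ^{2} =
\lla su_{\e}, \dbar \beta_{\e} \rra_{h^{m+1}_{\e}, \ome}
= \lla D^{''*}_{h^{m+1}_{\e}} su_{\e}, \beta_{\e} \rra_{h^{m+1}_{\e}, \ome}
\leq \| D^{''*}_{h^{m+1}_{\e}} su_{\e}\|_{h^{m+1}_{\e}, \ome}
\| \beta_{\e} \|_{h^{m+1}_{\e}, \ome}.
\end{equation*}
The first equality is just $\dbar \beta_{\e} = s u_{\e}$; the second is the defining adjoint relation for $D^{''*}_{h^{m+1}_{\e}}$, which is legitimate because $\ome$ is complete (so the formal adjoint coincides with the Hilbert space adjoint) and because $s u_{\e} \in \mathrm{Dom}\, D^{''*}_{h^{m+1}_{\e}}$ — this membership follows from the Bochner--Kodaira identity applied in Step 2, i.e. from the fact that the right-hand side of \eqref{B-eq2} is finite; the inequality is Cauchy--Schwarz. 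Now let $\e \to 0$: the first factor on the right tends to $0$ by Claim \ref{D''}, while the second factor stays bounded by Claim \ref{sol}. Hence $\| su_{\e} \|_{h^{m+1}_{\e}, \ome}^{2} \to 0$, which is exactly the assertion (up to the harmless typo $\e \to \infty$ for $\e \to 0$ in the statement).

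I do not expect any serious obstacle here: the claim is essentially bookkeeping, extracting the consequence of the two preceding claims via one Cauchy--Schwarz step. The only point demanding a line of care is justifying the integration-by-parts $\lla su_{\e}, \dbar \beta_{\e} \rra = \lla D^{''*} su_{\e}, \beta_{\e} \rra$ on the noncompact manifold $Y$; this is exactly why the complete Kähler metric $\ome$ was constructed in Step 1, and why it was checked that $su_{\e}$ has finite $\|D^{''*}_{h^{m+1}_{\e}} \cdot\|$-norm (so both forms lie in the domains of the relevant closed operators). With completeness in hand, the adjoint relation holds on all of the respective domains, and the proof of the claim is complete. The genuine content of the argument — controlling $D^{''*} s u_{\e}$ via a Bochner-type identity despite the curvature only being $\geq -\e\omega$, and getting a uniform $L^2$ bound on $\beta_{\e}$ — has already been done in Steps 2 and 3; what remains in Step 4 is to feed $u_{\e} \to 0$ (in the appropriate weak sense) back into the decomposition $u = u_{\e} + \dbar v_{\e}$ to conclude $\{u\} = 0$.
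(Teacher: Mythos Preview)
Your proposal is correct and follows essentially the same argument as the paper: substitute $\dbar\beta_{\e}=su_{\e}$, integrate by parts using the Hilbert-space adjoint (valid because $\ome$ is complete), apply Cauchy--Schwarz, and conclude from Claims \ref{D''} and \ref{sol}. Your extra lines justifying $su_{\e}\in\mathrm{Dom}\,D^{''*}_{h^{m+1}_{\e}}$ and the adjoint relation on the noncompact $Y$ are more explicit than the paper's ``straightforward computations'' but add nothing new in substance.
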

\hspace{-0.6cm}
\textit{Proof of Claim \ref{converge}.}\ \ 
Take  
$\beta_{\e} \in L_{(2)}^{n, i-1}
(F^{m+1})_{h^{m+1}_{\e}, \ome}$ satisfying the properties 
in Claim \ref{sol}.  
Then straightforward computations yield 
\begin{align*}
\| s u_{\e}\|_{h^{m+1}_{\e}, \ome}^{2}
&=\lla  s u_{\e}, \dbar \beta_{\e}  
 \rra_{h^{m+1}_{\e}, \ome} \\
&=
 \lla  D^{''*}_{h^{m+1}_{\e}}s u_{\e},\beta_{\e}  
 \rra_{h^{m+1}_{\e}, \ome} \\
&\leq 
\|D^{''*}_{h^{m+1}_{\e}} s u_{\e}\| _{h^{m+1}_{\e}, \ome}
\|\beta_{\e} \|_{h^{m+1}_{\e}, \ome}. 
\end{align*}
The norm of    
$\beta_{\e}$ is  uniformly bounded by Claim \ref{sol}. 
On the other hand, the norm 
$\|D^{''*}_{h^{m+1}_{\e}} s u_{\e}\| _{h^{m+1}_{\e}, \ome}$ 
converges to zero by Claim \ref{D''}. 
Therefore the norm $\| s u_{\e}\|_{h_{\e}, \ome}$ also converges to zero. 
\qed

From now on, 
we fix a small positive number $\e_{0}>0$. 
Then for any positive number $\e$ with $0< \e < \e_{0}$, 
by property (b) of $h_{\e}$, we obtain
\begin{equation*}
\|u_{\e} \|_{h_{\e_{0}}, \ome} \leq \|u_{\e} \|_{h_{\e}, \ome} 
\leq  \|u \|_{h, \omega}. 
\end{equation*}
These inequalities say that the norms of $\{ u_{\e} \}_{\e >0}$ with respect to 
$h_{\e_{0}}$ are
uniformly bounded. 
Therefore 
there exists a subsequence of $\{ u_{\e} \}_{\e >0}$ 
that converges to 
$\alpha \in L_{(2)}^{n, i}(Y, F)_{h_{\e_{0}}, \ome}$
with respect to the weak $L^{2}$-topology. 
For simplicity, we denote this subsequence 
by the same notation $\{ u_{\e} \}_{\e >0}$. 
Then we prove the following claim.

\begin{claim}\label{zero}
The weak limit $\alpha$ of $\{ u_{\e} \}_{\e >0}$ 
in $\in L_{(2)}^{n, i}(Y, F)_{h_{\e_{0}}, \ome}$
is zero. 
\end{claim}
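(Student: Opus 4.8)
\textit{Proof proposal for Claim \ref{zero}.}

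The plan is to deduce $\alpha = 0$ from the strong convergence $su_\e \to 0$ obtained in Claim \ref{converge}, together with the weak convergence $u_\e \rightharpoonup \alpha$ and the observation that multiplication by the holomorphic section $s$ is a bounded, hence weakly continuous, operator between the relevant $L^2$-spaces. There is no real analytic difficulty here beyond the bookkeeping of the three metrics $h_\e$, $h_{\e_0}$, $h$: the main point is to arrange that the strong limit and the weak limit take place in one and the same Hilbert space, so that the functional-analytic argument applies.

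First I would transfer the convergence of Claim \ref{converge} onto the fixed metric $h_{\e_0}$. For $0 < \e < \e_0$, property $(b)$ of $\{h_\e\}$ gives $h_{\e_0} \leq h_\e$, hence $h^{m+1}_{\e_0} \leq h^{m+1}_\e$ pointwise, so that
\[
\|su_\e\|_{h^{m+1}_{\e_0}, \ome} \leq \|su_\e\|_{h^{m+1}_\e, \ome}.
\]
By Claim \ref{converge} the right-hand side tends to $0$, so $su_\e \to 0$ \emph{strongly} in $L^{n,i}_{(2)}(Y, F^{m+1})_{h^{m+1}_{\e_0}, \ome}$.

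Next, consider the map $T \colon L^{n,i}_{(2)}(Y, F)_{h_{\e_0}, \ome} \to L^{n,i}_{(2)}(Y, F^{m+1})_{h^{m+1}_{\e_0}, \ome}$ defined by $T(w) := sw$. Since $|sw|^2_{h^{m+1}_{\e_0}} = |s|^2_{h^m_{\e_0}}\,|w|^2_{h_{\e_0}}$ pointwise, and $|s|_{h^m_{\e_0}} \leq |s|_{h^m}$ is bounded on $X$ (by property $(b)$ of $\{h_\e\}$ together with assumptions $(2)$, $(3)$), the operator $T$ is bounded and linear, hence weakly continuous. Therefore $u_\e \rightharpoonup \alpha$ in $L^{n,i}_{(2)}(Y, F)_{h_{\e_0}, \ome}$ implies $su_\e \rightharpoonup s\alpha$. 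On the other hand $su_\e \to 0$ strongly, hence weakly, and weak limits are unique, so $s\alpha = 0$ almost everywhere on $Y$. Finally, since $s$ is a non-zero holomorphic section of $F^m$ and $Y = X \setminus Z$ with $Z = \{s = 0\}$, the section $s$ is nowhere vanishing on $Y$; thus $|s|_{h_{\e_0}} > 0$ everywhere on $Y$, and $s\alpha = 0$ forces $\alpha = 0$ almost everywhere on $Y$, i.e. $\alpha = 0$ in $L^{n,i}_{(2)}(Y, F)_{h_{\e_0}, \ome}$. The step to watch is simply the one linking the various metrics; the rest is routine.
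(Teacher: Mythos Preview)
Your proof is correct and slightly cleaner than the paper's. Both arguments begin from the same observation: transferring Claim \ref{converge} to the fixed metric $h_{\e_0}$ via property $(b)$ gives $su_\e \to 0$ strongly in $L^{n,i}_{(2)}(Y, F^{m+1})_{h^{m+1}_{\e_0}, \ome}$, and since $s$ vanishes nowhere on $Y = X \setminus \{s=0\}$ and $h_{\e_0}$ is smooth there, this should force $\alpha = 0$. The difference is in how the weak convergence $u_\e \rightharpoonup \alpha$ is combined with this. The paper restricts to the open sets $A_\delta := \{|s|^2_{h^m_{\e_0}} > \delta\}$, observes that $\|u_\e\|^2_{A_\delta} \leq \delta^{-1}\|su_\e\|^2 \to 0$, and then uses lower semi-continuity of the norm under weak limits to conclude $\alpha|_{A_\delta} = 0$; taking the union over $\delta > 0$ gives $\alpha = 0$ on $Y$. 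You instead package multiplication by $s$ as a bounded operator $T$, invoke its weak continuity to get $su_\e \rightharpoonup s\alpha$, and use uniqueness of weak limits. Your route avoids the exhaustion by $A_\delta$ and the explicit verification that restriction to an open subset commutes with weak convergence; the paper's route is more hands-on but perhaps makes clearer, without appealing to abstract operator theory, why nothing can go wrong.
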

\hspace{-0.6cm}
\textit{Proof of Claim \ref{zero}.}\ \ 
For every positive number $\delta>0$, 
we define the subset $A_{\delta}$ of $Y$ by 
$A_{\delta}:= \{x \in Y \mid  |s|^{2}_{h^{m}_{\e_{0}}} > \delta  \}$. 
Since a weight of $h_{\e_{0}}$ is upper semi-continuous, 
$|s|^{2}_{h^{m}_{\e_{0}}}$ is lower semi-continuous. 
Hence $A_{\delta}$ is an open set of $Y$. 
We estimate the norm of $u_{\e}$ on $A_{\delta}$. 
By easy computations, we have 
\begin{align*}
\| s u_{\e}  \|^{2}_{h^{m+1}_{\e}, \ome} 
&\geq \| s u_{\e}  \|^{2}_{h^{m+1}_{\e_{0}}, \ome} \\
&\geq \int_{A_{\delta}} |s|^{2}_{ h^{m}_{\e_{0}} } 
|u_{\e}|^{2}_{h_{\e_{0}}, \ome}\ \ome^{n} \\
&\geq \delta  \int_{A_{\delta}} 
|u_{\e}|^{2}_{h_{\e_{0}}, \ome}\ \ome^{n} 
\geq 0
\end{align*}
for any $\delta>0$.

Note that $u_{\e} |_{A_{\delta}}$ 
converges to $\alpha |_{A_{\delta}}$ with respect to 
the weak $L^{2}$ topology in 
$ L_{(2)}^{n, i}(A_{\delta}, F)_{h_{\e_{0}}, \ome}$,  
where $u_{\e} |_{A_{\delta}}$ (resp. $\alpha |_{A_{\delta}}$) is  
the restriction of  $u_{\e}$ (resp. $\alpha$) to $A_{\delta}$. 
Indeed, 
for every 
$\gamma \in L_{(2)}^{n, i}(A_{\delta}, F)_{h_{\e_{0}}, \ome}$, 
the inner product  
$\lla u_{\e} |_{A_{\delta}}, \gamma
\rra_{A_{\delta}} 
= \lla u_{\e}, \widetilde{\gamma} 
\rra_{Y} $
converges to 
$\lla \alpha, \widetilde{\gamma}
\rra_{Y} 
= \lla \alpha, \gamma  
\rra_{A_{\delta}} $, 
where  $\widetilde{\gamma}$ is the zero extension of $\gamma$ to 
$Y$. 
Since $u_{\e} |_{A_{\delta}}$ 
converges to $\alpha |_{A_{\delta}}$, 
we obtain
\begin{equation*}
\|\alpha |_{A_{\delta}} \|_{h_{\e_{0}}, \omega} 
\leq 
\liminf_{\e \to 0}\|u_{\e} |_{A_{\delta}} \|_{h_{\e_{0}},  \omega}=0. 
\end{equation*}
This is because the norm of the weak limit can 
be estimated by the limit inferior of norms of a sequence. 
Therefore 
we know that $\alpha |_{A_{\delta}} = 0$ for any $\delta>0$. 
By the definition of $A_{\delta}$, the union of 
$\{A_{\delta} \}_{\delta >0}$ is equal to $Y=X \setminus Z$. 
Hence the weak limit $\alpha $ is zero on $Y$.
\qed

By using Claim \ref{zero}, we complete the proof of Theorem \ref{main}. 
By the definition of $u_{\e}$, 
we have 
\begin{equation*}
u = u_{\e} + \dbar v_{\e}. 
\end{equation*}
Claim \ref{zero} implies 
that $\dbar v_{\e}$ converges to $u$ with respect to 
the weak $L^{2}$-topology. 
Then we can easily see that 
$u$ is a $\dbar$-exact form 
(that is, $u \in {\rm{Im}} \dbar$). 
This is because
the subspace ${\rm{Im}} \dbar$ 
is closed in 
$L_{(2)}^{n, i}(Y, F)_{h_{\e_{0}}, \ome}$ 
with respect to the weak $L^{2}$-topology. 
Indeed, for every 
$\gamma_{1} + D^{''*}\gamma_{2} \in
\mathcal{H}^{n, i}(Y, F)_{h_{\e_{0}}, \ome}
\oplus {\rm{Im}} D^{''*}_{h_{\e_{0}}}$, we have 
$$\lla u, \gamma_{1} + D^{''*}\gamma_{2}  \rra =
\lim_{\e \to 0}\lla \dbar v_{\e}, \gamma_{1} + D^{''*}\gamma_{2} 
\rra =0.$$
Therefore we know $u \in {\rm{Im}} \dbar$.

In summary, we proved that $u$ is a $\dbar$-exact form in 
$L_{(2)}^{n, i}(Y, F)_{h_{\e_{0}}, \ome}$. 
In other words, we obtained a solution 
$f \in L^{n, i-1}_{(2)}(Y, F)_{h_{\e_{0}}, \ome}$ 
of the $\dbar$-equation $u= \dbar f$. 
From this fact, we can find a solution 
$g \in L^{n, i-1}_{(2)}(Y, F)_{{h}, \omega}$ of 
the $\dbar$-equation $u = \dbar g$. 
Note that $g$ may not be different from $f$. 
It says that the cohomology class $\alpha=\{u\}$ is the zero class.

Indeed, we consider the Dolbeault cohomology groups obtained 
from the closed operator $\dbar_{h, \omega}$ 
(resp. $\dbar_{h_{\e_{0}}, \ome}$) 
defined on the $L^{2}$-space $L^{n, \bullet}_{(2)}(X, F)_{h, \omega}$ 
(resp. $L^{n, \bullet}_{(2)}(Y, F)_{h_{\e_{0}}, \ome}$ ). 
Then we have the following commutative diagram: 
$$
\begin{CD}
{{\rm{Ker}}\ \dbar_{h, \omega}} /{{\rm{Im}}\, \dbar_{h, \omega}} 
@>j>>{{\rm{Ker}}\ \dbar_{h_{\e_{0}}, \ome}} /{{\rm{Im}}
\, \dbar_{h_{\e_{0}}, \ome}}\\ 
@V{\phi_{1}}V\cong V  @V{\phi_{2}}V\cong V
\\ \check{H}^{i}(X, K_{X}\otimes F \otimes \I{h}) @=  
\check{H}^{i}(X, K_{X}\otimes F \otimes \I{h_{\e_{0}}}).  
\end{CD}
$$
Here $j$ is the map induced by the natural map from 
$L^{n, \bullet}_{(2)}(X, F)_{h, \omega}$ to 
$L^{n, \bullet}_{(2)}(Y, F)_{h_{\e_{0}}, \ome}$, 
and $\phi_{i}$ is the De Rham-Weil isomorphism to 
the $\rm{\check{C}}$ech cohomology group. 
See \cite[Claim 1]{Fuj12-A} for the construction of $\phi_{i}$. 
The blew equality is obtained from $\I{h_{\e}}=\I{h}$. 
(In this step, we essentially use property $(c)$.) 
The cohomology class $\alpha=\{u\}$ represented by 
$u \in L^{n, q}_{(2)}(X, F)_{h, \omega}$ goes to the zero class by $j$ 
since we have $u= \dbar f$ for some $f \in L^{n, i-1}_{(2)}(Y, F)_{h_{\e_{0}}, \ome}$. 
By chasing the above diagram, we can obtain a solution 
$g \in L^{n, i-1}_{(2)}(Y, F)_{{h}, \omega}$ satisfying 
the $\dbar$-equation $u = \dbar g$. 

\qed
\vspace{0.4cm}

\subsection{Proof of Corollary \ref{main-co}}
In this subsection,  
we prove that $h_{\min}$ and $h_{\rm{Siu}}$ satisfy
the assumptions of Theorem \ref{main}.

First we recall the definition of Siu's metrics $h_{\rm{Siu}}$ 
(which was first introduced and 
plays an central role in  \cite{Siu98}). 
Let $F$ be a line bundle whose Kodaira dimension is non-negative 
(that is, some positive multiple of $F$ admits sections).  
For every positive integer $m>0$, let 
$h_{m}$ be the singular metric on $F$ 
induced by a basis of $H^{0}(X, F^m)$, 
namely $h_{m}$ is (locally) defined by 
\begin{equation*}
- \log h_{m}:= \frac{1}{2m} \log
\Big( \sum_{j=1}^{N_{m}} |s_{j}^{(m)}|^{2}  \Big), 
\end{equation*}
where 
$\{s_{j}  \}_{j=1}^{N_{m}}$ is a basis of $H^{0}(X, F^m)$. 
Then by taking  positive numbers 
$\{ \varepsilon_{m} \}_{m\geq 1}$  
Siu's metric $h_{\rm{Siu}}$ can be defined  by
\begin{equation*}
-\log h_{\rm{Siu}} = \log
\sum_{m \geq 1} \varepsilon_{m} \frac{1}{h_{m}}.    
\end{equation*}
This metric $h_{\rm{Siu}}$ and the multiplier ideal 
sheaf $\I{h_{\rm{Siu}}}$
depend on the choice of 
$\{\e_{k}\}_{k=1}^{\infty}$ (see \cite{Kim14}), 
but $h_{\rm{Siu}}$ always  
admits an analytic Zariski decomposition. 
Thus we can expect a Nadel vanishing theorem  
for $\I{h_{\rm{Siu}}}$.

\begin{cor}[=Theorem \ref{m-min}]\label{main-co}
Let $F$ be a big line bundle 
on a smooth projective variety $X$ and let 
$h$ be either a metric 
with minimal singularities $h_{\min}$  or 
Siu's metric $h_{\rm{Siu}}$  on $F$. 
Then we have 
\begin{equation*}
H^{i}(X, K_{X} \otimes F \otimes \I{h}) = 0 
\hspace{0.4cm} {\text{for}}\ {\text{any}}\ i >0.
\end{equation*}
\end{cor}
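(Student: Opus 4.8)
The plan is to deduce Corollary \ref{main-co} from Theorem \ref{main} by verifying, for both $h=h_{\min}$ and $h=h_{\rm{Siu}}$, that the hypotheses of Theorem \ref{main} hold: namely that the metric has (semi-)positive curvature, and that there exist a nonzero section $s$ of some $F^m$ and a metric $h_{\rbig}$ on $F$ with strictly positive curvature such that $h$ is less singular than $h_{\rbig}$ and $|s|_{h^m_{\rbig}}$ is bounded on $X$. First I would fix the input coming from bigness: since $F$ is big, there exist $m>0$ and a nonzero section $s\in H^0(X,F^m)$ together with an ample line bundle $A$ and an effective divisor $E$ with $F^m\cong A\otimes\mathcal{O}_X(E)$; equivalently, writing $F=A'+E'$ with $A'$ a $\mathbb{Q}$-ample class and $E'$ effective, we may build a singular metric $h_{\rbig}$ on $F$ whose weight is (a fixed smooth metric of positive curvature on the ample part) plus $\tfrac{1}{m}\log|s_E|^2$, where $s_E$ cuts out the effective part. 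By construction $\sqrt{-1}\Theta_{h_{\rbig}}(F)\geq \omega$ for a K\"ahler form $\omega$, so condition (1) holds, and the pointwise norm $|s|_{h^m_{\rbig}}$ is bounded because the logarithmic singularities of $h^m_{\rbig}$ along $E$ are exactly cancelled by the vanishing of $s$, giving condition (3). (A standard Kodaira-type construction; cf. \cite{Dem}, \cite{Laz}.)

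Next I would check that $h_{\min}$ and $h_{\rm{Siu}}$ each have (semi-)positive curvature and are less singular than this $h_{\rbig}$. For $h_{\min}$ this is immediate from its defining properties recalled in Section 2: it has semi-positive curvature and is less singular than \emph{any} metric on $F$ with positive curvature, in particular than $h_{\rbig}$, so conditions (2) and hence the semi-positivity hypothesis of Theorem \ref{main} hold at once. For $h_{\rm{Siu}}$, the curvature is semi-positive because $-\log h_{\rm{Siu}}$ is (locally) a logarithm of a sum of moduli squared of holomorphic sections, hence psh, so $\sqrt{-1}\Theta_{h_{\rm{Siu}}}(F)\geq 0$. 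The "less singular than $h_{\rbig}$" property for $h_{\rm{Siu}}$ follows because $h_{\rm{Siu}}$, like $h_{\min}$, admits an analytic Zariski decomposition and in fact $h_{\rm{Siu}}$ is less singular than any metric constructed from a single pluricanonical-type section: concretely, since $\e_m/h_m\leq \sum_{m}\e_m/h_m$ for each $m$, we get $h_{\rm{Siu}}\leq \e_m^{-1}h_m$ for every $m$, and one checks that $h_m$ (for the appropriate $m$) is less singular than $h_{\rbig}$ because $h_{\rbig}$ was built from a \emph{chosen} section $s$ of $F^m$ while $h_m$ is built from a full basis, so $|s|^2\leq \sum_j|s_j^{(m)}|^2$ up to a constant. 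Hence $h_{\rm{Siu}}$ is less singular than $h_{\rbig}$, verifying (2).

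With conditions (1), (2), (3) established for both choices of $h$, Theorem \ref{main} applies directly and yields $H^i(X, K_X\otimes F\otimes \I{h})=0$ for all $i>0$, which is exactly the assertion of Corollary \ref{main-co}; specializing to $h=h_{\min}$ recovers Theorem \ref{m-min}.

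The step I expect to require the most care is the verification that $h_{\rm{Siu}}$ is less singular than a metric with strictly positive curvature. One must be slightly careful that the particular section $s$ used to build $h_{\rbig}$ comes from the \emph{same} power $F^m$ that appears among the $h_m$'s (or pass to a common multiple, using that a power $s^k$ still satisfies the needed boundedness, as noted in Step 3 of the proof of Theorem \ref{main}), and that the summand $\e_m/h_m$ genuinely dominates $1/h_m$ only up to the fixed constant $\e_m$, which is harmless. For $h_{\min}$ there is essentially nothing to prove beyond invoking its minimality property. No new analysis is needed; the whole corollary is a matter of assembling the bigness construction and the defining properties of the two metrics.
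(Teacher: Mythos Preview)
Your approach is exactly the paper's: verify hypotheses (1)--(3) of Theorem \ref{main} for $h_{\min}$ and $h_{\rm{Siu}}$ using the Kodaira-type decomposition $F^{m}=A\otimes E$. The construction of $h_{\rbig}$ and $s$, and the treatment of $h_{\min}$, match the paper and are fine.

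There is one small gap in your argument for $h_{\rm{Siu}}$. You assert that $h_{m}$ is less singular than $h_{\rbig}$ because ``$|s|^{2}\leq \sum_{j}|s_{j}^{(m)}|^{2}$ up to a constant.'' But $h_{\rbig}^{m}=h_{A}h_{E}$ has $1/h_{\rbig}^{m}=(1/h_{A})\,|s_{E}|^{2}$, whereas your single section gives only $|s_{A}|^{2}|s_{E}|^{2}\leq \sum_{j}|s_{j}^{(m)}|^{2}$; since $s_{A}$ vanishes somewhere, this does not bound $(1/h_{A})\,|s_{E}|^{2}$ from above by $\sum_{j}|s_{j}^{(m)}|^{2}$. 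The paper fixes this by taking $A$ \emph{very ample} and choosing $h_{A}$ to be the (smooth) metric induced by a basis $\{t_{j}\}$ of $H^{0}(X,A)$, so that $1/h_{\rbig}^{m}=\sum_{j}|s_{E}\otimes t_{j}|^{2}$; since $\{s_{E}\otimes t_{j}\}$ extends to a basis of $H^{0}(X,F^{m})$, one gets $1/h_{\rbig}^{m}\leq C/h_{m}^{m}$ directly. With that adjustment (or equivalently, noting that any smooth $h_{A}$ is comparable to this section-induced one once $A$ is very ample), your proof goes through and coincides with the paper's.
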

\begin{proof}
It is sufficient to prove that 
the metrics $h_{\min}$ and $h_{\rm{Siu}}$ satisfy 
the assumptions in Theorem \ref{main}. 
Since $F$ is big, 
some positive multiple $F^{m}$ of $F$ can be written 
as $F^{m}= A \otimes E$, 
where $A$ is a very ample line bundle and $E$ is an 
effective line bundle. 
Take a (non-zero) section $s_{A}$ (resp. $s_{E}$) 
of $A$ (resp. E). 
Let $h_{A}$ be the metric induced by a basis of 
$H^{0}(X, A)$. 
Then $h_{A}$ is smooth and has strictly positive curvature
since $A$ is very ample. 
Further let 
$h_{E}$ be the singular metric  defined by the section $s_{E}$. 
Now we consider the section $s$ of $F^m$ and 
the singular metric $h_{\rbig}$ on $F$ defined by 
\begin{equation*}
s := s_{A} \otimes s_{E}\hspace{0.2cm} {\rm{and}} \hspace{0.2cm}
h_{\rbig}: = (h_{A} h_{E})^{1/m}. 
\end{equation*} 
Since the curvature of $h_{\rbig}$ is equal to 
$(\sqrt{-1}\Theta_{h_{A}}(A) + [{\rm{div}}s_{E}])/m $,  
the metric $h_{\rbig}$ 
has strictly positive curvature. 
Further 
the pointwise norm $|s|_{h^{m}_{\rbig}}$ is bounded. 
This is because, 
the norm $|s|_{h^{m}_{\rbig}}$ is equal to 
$|s_{A}|_{h_{A}}$ and 
$h_{A}$ is a smooth metric.  
It remains to confirm assumption (2) in Theorem \ref{main}.
When $h$ is a metric with minimal singularities, 
assumption (2) is obvious by the definition of $h_{\min}$. 
We consider the case where $h$ is Siu's metric $h_{\rm{Siu}}$. 
By the construction of $h_{\rm{Siu}}$, 
the metric $h_{\rm{Siu}}$ is less singular than $h_{m}$. 
When $\{ t_{j} \}_{j=1}^{N}$ is a basis of $H^{0}(X, A)$, 
sections $\{ s_{E}\otimes t_{j} \}_{j=1}^{N}$ become a part of 
a basis of $H^{0}(X, F^{m})$. 
Therefore, by the construction of $h_{m}$ and $h_{\rbig}$, 
the metric $h_{m}$ is less singular than $h_{\rbig}$. 
\end{proof}

\section{Birational Zariski Decomposition and Minimal Singularities}
In this section, 
we show that 
metrics with minimal singularities do 
not always have 
analytic singularities even 
if line bundles are big. 
The content of this section is known to specialists, 
but we give the references and proof for the 
reader's 
convenience. 
Throughout this section, let $X$ be a compact 
K\"ahler manifold and $D$ be a divisor on $X$. 
We denote by $\mathcal{O}_{X}(D)$,  
the line bundle (the invertible sheaf) 
defined by $D$. 

\begin{prop}\label{birat}
Let $h_{\min}$ be a metric 
with minimal singularities on $\mathcal{O}_{X}(D)$.
Assume that $h_{\min}$ has analytic singularities. 
Then $\mathcal{O}_{X}(D)$ admits a birational Zariski decomposition. 
That is, there exist a modification 
$\pi: \widetilde{X} \to X$, 
a nef $\mathbb{R}$-divisor $P$ 
and an $\mathbb{R}$-effective divisor $N$ on $\widetilde{X}$ with the following properties$:$ 
\begin{itemize}
\item[$\bullet$] $\pi^{*}D = P + N$. 
\item[$\bullet$] For any positive integer $k > 0$, the map 
\begin{equation*}
H^{0}(X, \mathcal{O}_{\widetilde{X}}(\lfloor kP \rfloor))
\to 
H^{0}(X, \mathcal{O}_{\widetilde{X}}( kD ))
\end{equation*}
induced by the section $e_{k}$ is an isomorphism,  
where $e_{k}$ is the natural section of $\lceil kN \rceil$.
\end{itemize}
Here $\lfloor G \rfloor$ 
$($resp. $\lceil G \rceil$$)$ 
denotes the divisor defined by the
round-downs $($resp. the round-ups$)$ of the coefficients 
of an $\mathbb{R}$-divisor $G$. 
\end{prop}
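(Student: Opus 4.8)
The plan is to make the analytic singularities of $h_{\min}$ divisorial after a modification, to read off $P$ and $N$ from the pulled-back metric, and to deduce the isomorphism on spaces of sections from the defining property of metrics with minimal singularities. Concretely, since $h_{\min}$ has analytic singularities there are a coherent ideal sheaf $\mathcal{I}\subset\mathcal{O}_X$ and a real number $c>0$ such that a local weight of $h_{\min}$ has the form $\varphi=\tfrac{c}{2}\log(|f_1|^2+\dots+|f_k|^2)+v$ with $v$ smooth and $f_1,\dots,f_k$ local generators of $\mathcal{I}$. By Hironaka's principalization theorem I would choose a modification $\pi\colon\widetilde{X}\to X$ with $\pi^{-1}\mathcal{I}\cdot\mathcal{O}_{\widetilde{X}}=\mathcal{O}_{\widetilde{X}}(-E)$ for an effective divisor $E$; if $g$ is a local equation of $E$ then the $\pi^{*}f_i/g$ have no common zero, so $\pi^{*}\varphi=\tfrac{c}{2}\log|g|^2+w$ with $w$ smooth, i.e. the singularities of $\pi^{*}h_{\min}$ are carried, with multiplicities, exactly by $E$. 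Then I set $N:=cE$ (an $\mathbb{R}$-effective divisor) and $P:=\pi^{*}D-N$, so that $\pi^{*}D=P+N$ holds by construction.

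Next I would check that $P$ is nef. Let $h_N$ be the singular metric on $\mathcal{O}(N)$ with local weight $\tfrac{c}{2}\log|g|^2$ and put $h_P:=\pi^{*}h_{\min}\otimes h_N^{-1}$ on $\mathcal{O}(P)$; its local weight is the smooth function $w$ above, so $h_P$ is a smooth metric. Its curvature $\sqrt{-1}\Theta_{h_P}(P)=\pi^{*}\big(\sqrt{-1}\Theta_{h_{\min}}(D)\big)-[N]$ is a smooth $(1,1)$-form which coincides with the semi-positive current $\pi^{*}\big(\sqrt{-1}\Theta_{h_{\min}}(D)\big)$ on the dense open set $\widetilde{X}\setminus\mathrm{Supp}\,E$, hence is $\geq 0$ everywhere. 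A smooth semi-positive representative of $c_1(\mathcal{O}(P))$ shows that $P$ is nef.

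Finally I would establish the isomorphism for each $k>0$. Injectivity is formal: multiplication by the non-zero section $e_k$ of $\lceil kN\rceil$ embeds $H^{0}(\widetilde{X},\mathcal{O}(\lfloor kP\rfloor))$ into $H^{0}(\widetilde{X},\mathcal{O}(\lfloor kP\rfloor+\lceil kN\rceil))=H^{0}(\widetilde{X},\mathcal{O}(k\pi^{*}D))$, and this last group equals $H^{0}(X,\mathcal{O}_X(kD))$ since $\pi$ is a modification of the smooth (hence normal) variety $X$, so $\pi_{*}\mathcal{O}_{\widetilde{X}}=\mathcal{O}_X$. For surjectivity, take $\sigma\in H^{0}(X,\mathcal{O}_X(kD))$. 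The section metric attached to $\sigma$ on $\mathcal{O}_X(kD)$ has semi-positive curvature, so by minimality $h_{\min}^{k}$ is less singular than it, i.e. $|\sigma|_{h_{\min}^{k}}$ is bounded on $X$; hence $|\pi^{*}\sigma|_{\pi^{*}h_{\min}^{k}}$ is bounded on $\widetilde{X}$. Writing $E=\sum a_j E_j$, the weight of $\pi^{*}h_{\min}^{k}$ behaves like $kca_j\log|z_j|$ near $E_j$, so boundedness forces $\mathrm{ord}_{E_j}(\pi^{*}\sigma)\geq kca_j$, hence $\geq\lceil kca_j\rceil$ since the order of vanishing is an integer. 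Therefore $\mathrm{div}(\pi^{*}\sigma)\geq\lceil kN\rceil$, so $\pi^{*}\sigma=e_k\cdot\tau$ with $\tau\in H^{0}(\widetilde{X},\mathcal{O}(k\pi^{*}D-\lceil kN\rceil))=H^{0}(\widetilde{X},\mathcal{O}(\lfloor kP\rfloor))$, using $\lceil kN\rceil=k\pi^{*}D-\lfloor kP\rfloor$. This $\tau$ maps to $\sigma$, which completes the argument.

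The hard part will be the bookkeeping rather than any deep analytic input: one must be sure that the singular part of $\pi^{*}h_{\min}$ is \emph{exactly} the divisor $N$ — this is what simultaneously makes $P$ nef and pins the vanishing order of $\pi^{*}\sigma$ down to $\lceil kN\rceil$, instead of merely bounding it — and one must keep the round-downs and round-ups consistent throughout. The two analytic ingredients, namely boundedness of $|\sigma|_{h_{\min}^{k}}$ from minimality and semi-positivity of the residual smooth metric $h_P$, are standard.
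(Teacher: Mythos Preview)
Your proof is correct and follows essentially the same route as the paper: principalize the ideal sheaf to make the singularities of $h_{\min}$ divisorial, read off $N$ and the smooth semi-positive residual giving nefness of $P$, and use minimality of $h_{\min}$ to force $\operatorname{div}(\pi^{*}\sigma)\geq\lceil kN\rceil$. The only cosmetic difference is that the paper first proves the auxiliary fact that $\pi^{*}h_{\min}$ is itself a metric with minimal singularities on $\mathcal{O}_{\widetilde{X}}(\pi^{*}D)$ (via $\psi_g(x)=\sup_{y\in\pi^{-1}(x)}\varphi_g(y)$) and then compares Lelong numbers upstairs, whereas you compare $h_{\min}^{k}$ with the section metric already on $X$ and pull back the boundedness; your shortcut is perfectly valid and slightly more direct.
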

\begin{proof}
Since $h_{\min}$ has analytic singularities, 
we can take an ideal sheaf 
$\mathcal{I} \subset \mathcal{O}_{X}$ such that 
a local weight $ \varphi _{\min}$ can be   
written as 
\begin{equation*}
\varphi_{\min} = \frac{c}{2} \log \big( 
|f_{1}|^{2} + |f_{2}|^{2} + \cdots + |f_{k}|^{2}\big) +v, 
\end{equation*}
where $f_{i}$ are local generators of $\mathcal{I}$   
(see Definition \ref{s-met}).  
By taking a resolution of $\mathcal{I}$, we obtain 
a modification $\pi : \widetilde{X} \to X$ with  
\begin{equation*}
\sqrt{-1}\Theta_{\pi^{*}h_{\min}} (\pi^{*}D) = 
\beta + [N],   
\end{equation*}
where $\beta $ is a smooth semi-positive $(1,1)$-form and 
$N$ is an effective $\mathbb{R}$-divisor. 
On the other hand, 
the pull-back $\pi^{*}h_{\min}$ of $h_{\min}$
is a metric with minimal singularities on 
$\mathcal{O}_{\widetilde{X}}(\pi^{*}D)$. 
Indeed, we fix a smooth metric $h_{0}$ on 
$\mathcal{O}_{X}(D)$ and 
take an arbitrary singular metric $g$ on $\mathcal{O}_{\widetilde{X}}(\pi^{*}D)$ 
with semi-positive curvature.  
Then there exists an $L^{1}$-function $\varphi_{g}$
on $\widetilde{X}$ such that 
$g= \pi^{*}h_{0} e^{-\varphi_{g}}$   
since $\widetilde{X}$ is also a compact K\"ahler manifold. 
Now we define the function $\psi_{g}$ on $X$ by     
\begin{equation*}
\psi_{g}(x):= \sup_{y \in \pi^{-1}(x)} \varphi_{g}(y)
\quad \text{for any}\ x\ \in X. 
\end{equation*}
Then the curvature of 
$h_{0} e^{-\psi_{g}}$ is also positive. 
Hence $h_{\min} \leq C h_{0} e^{-\psi_{g}}$ for 
some positive constant $C>0$ by the definition of 
$h_{\min}$. 
It yields $\pi^{*} h_{\min} \leq C \pi^{*}h_{0} e^{-\varphi_{g}} =C g$. 
Therefore $\pi^{*} h_{\min}$ is a metric with minimal singularities.

Put $P:= \pi^{*}D- N$. 
Then the first Chern class of $P$ contains the semi-positive 
form $\beta$ (in particular $P$ is nef). 
Finally we show that the map in Proposition \ref{birat} 
is isomorphic. 
For an arbitrary section 
$s \in H^{0}(X, \mathcal{O}_{X}( kD ))$,  
the metric $\pi^{*}h_{\min}^{k}$ is less singular than 
the metric $h_{s}$ induced by the section $s$, 
since $\pi^{*}h_{\min}^{k}$ is 
a metric with minimal singularities 
$\mathcal{O}_{\widetilde{X}}(k \pi^{*}D)$.   
In particular, the Lelong number of a weight of $k \pi^{*}h_{\min}$ 
is less than or equal to that of $h_{s}$, 
thus we obtain $kN \leq {\rm{div}} s $. 
Since ${\rm{div}} s$ is a $\mathbb{Z}$-divisor, 
we have $\lceil kN \rceil \leq {\rm{div}} s $. 
It implies that $s/e_{k}$ is a 
(holomorphic) section. 
This completes the proof. 
\end{proof}

In general, it is difficult to compute 
metrics with minimal singularities. 
Proposition \ref{birat} says 
that metrics with minimal singularities on line bundles  
admitting no birational Zariski decomposition 
never have analytic singularities. 
However there are at least two known examples which tell us that 
line bundles do not admit a birational Zariski decomposition, even if the line bundles are big.
(see \cite[IV, \S 2.6, Example 6.4]{Nak}, 
\cite[Theorem 1.1.]{Les12}).


\end{document}